\documentclass[12pt]{article}

\usepackage[english]{babel}
\usepackage{upref,amsfonts,amsxtra,latexsym,color}
\usepackage{amssymb,amsmath,amsthm,a4wide,epsf,mathrsfs,verbatim,hyperref}
\usepackage[all]{xy}
 

\newtheorem{theorem}{Theorem}[subsection]
\newtheorem{lemma}[theorem]{Lemma}
\newtheorem{corollary}[theorem]{Corollary}
\newtheorem{proposition}[theorem]{Proposition}

\theoremstyle{definition}
\newtheorem{definition}[theorem]{Definition}
\newtheorem{remark}[theorem]{Remark}
\newtheorem{example}[theorem]{Example}

\numberwithin{equation}{section}
\numberwithin{theorem}{section}

\newcommand{\cA}{{\cal A}}

\newcommand{\cP}{{\cal P}}

\newcommand{\cB}{{\cal B}}

\newcommand{\cI}{{\cal I}}

\newcommand{\C}{{\mathbb C}}
\newcommand{\Z}{{\mathbb Z}}
\newcommand{\N}{{\mathbb N}}

\newcommand{\B}{{\mathbb B}}

\newcommand{\R}{{\mathbb R}}
\newcommand{\Cs}{{$C^*$-al\-ge\-bra}}

\newcommand{\sh}{{$^*$-ho\-mo\-mor\-phism}}





\date{\empty}
\setcounter{tocdepth}{4}
\setcounter{section}{0}

\title{Just-infinite \Cs s and their invariants}

\author{Mikael R\o rdam\thanks{Supported  by the Danish Council for Independent Research, Natural Sciences, and the Danish National Research Foundation (DNRF) through the Centre for Symmetry and Deformation at the University of Copenhagen. Part of this work was done during my stay at the CRM attending the program ``IRP Operator Algebras: Dynamics and Interactions'' Barcelona. I thank Francesc Perera and the staff at the CRM for their hospitality.}}

\begin{document} 

\maketitle

\begin{abstract}
Just-infinite \Cs s, i.e., infinite dimensional \Cs s, whose proper quotients are finite dimensional, were investigated in \cite{GMR:JI}. One particular example of a just-infinite residually finite dimensional AF-algebras was constructed in \cite{GMR:JI}. In this paper we extend that construction by showing that each infinite dimensional metrizable Choquet simplex is affinely homeomorphic to the trace simplex of a  just-infinite residually finite dimensional \Cs. The trace simplex of any unital residually finite dimensional \Cs{} is hence realized by a just-infinite one. 
We determine the trace simplex of the particular residually finite dimensional AF-algebras constructed in \cite{GMR:JI}, and we show that it has precisely one extremal trace of type II$_1$.

We give a complete description of the Bratteli diagrams corresponding to residually finite dimensional AF-algebras. We show that a modification of any such Bratteli diagram, similar to the modification that makes an arbitrary Bratteli diagram simple, will yield a just-infinite residually finite dimensional AF-algebra. 
\end{abstract}

\section{Introduction} Just-infinite \Cs s were introduced and studied in \cite{GMR:JI} to establish an analogue of just infinite groups, which are infinite groups whose proper quotients are finite; and to examine possible connections between the two. It was shown in \cite{GMR:JI} that a separable \Cs{} is just-infinite if and only if either it is simple (and infinite dimensional), or it is an essential extension of a simple infinite dimensional \Cs{} by a finite dimensional one, or it is residually finite dimensional (RFD) and just-infinite. In the latter case its primitive ideal space is the space $Y_\infty = \{0\} \cup \N$  equipped with the (non-Hausdorff) topology making $\{0\}$ dense and all other singletons closed. In other words, the primitive ideal space of a separable RFD just-infinite \Cs{} $\cA$ is $\{0,I_1,I_2,I_3, \dots\}$, where $\cA/I_j$ is simple and finite dimensional, hence a full matrix algebra, say $M_{k_j}$. The sequence $\{k_j\}$ is called the \emph{characteristic sequence} for $\cA$, and coincides with the set (with multiplicities) of dimensions of irreducible finite dimensional representations of $\cA$. Its distinguished feature  in the just-infinite case is that it is countable and tends to infinity, as shown in \cite{GMR:JI}.

A concrete example of a RFD just-infinite AF-algebra was constructed in \cite{GMR:JI}.

The universal \Cs{} $C^*(G)$ of a discrete group $G$ is just-infinite if and only if the group algebra $\C[G]$ has a unique $C^*$-norm and is just-infinite as a $^*$-algebra. The latter implies that the group $G$ is just infinite, but the reverse implication does not hold. The group algebra $\C[G]$ has a unique $C^*$-norm if $G$ is locally finite. It was shown in \cite{BGS-2016} (see also \cite{GS-2017}) that there exist residually finite locally finite groups $G$ such that $\C[G]$ is just infinite (as a $^*$-algebra), and consequently, $C^*(G)$ is RFD and just-infinite, thus demonstrating that such \Cs s can arise from groups. 

We show in Section~\ref{sec:simplex} that each infinite dimensional metrizable Choquet simplex arises as the trace simplex of a RFD just-infinite AF-algebra. We prove this using a result of Lazar and Lindenstrauss, \cite{LazLin-1971}, that each such simplex is the inverse limit of finite dimensional simplices with surjective affine connecting mappings. The simplex of tracial states on a unital infinite dimensional RFD \Cs{} is necessarily infinite dimensional, and so it  also arises from a just-infinite RFD \Cs, in fact even an AF-algebra. In Section~\ref{Sec:example} we give concrete examples of infinite dimensional Choquet simplicies arising in this way, and show that the trace simplex of the RFD  just-infinite AF-algebra constructed in \cite[Section 4.1]{GMR:JI} is equal to $\Delta_\infty$, the Bauer simplex with extreme boundary equal to the one-point compactification of $\N$. The extreme trace corresponding to the point at infinity is of type II$_1$, and the other extremal traces are of type I.

In Section~\ref{Sec:Bratteli} we give a complete description of all RFD AF-algebras in terms of their Bratteli diagrams, and we describe which of these Bratteli diagrams correspond to RFD just-infinite AF-algebras. Our description suggests that the class of RFD just-infinite \Cs s is rather large, and that the inclusion of RFD just-infinite \Cs s inside the class of all RFD \Cs s is similar to the inclusion of simple \Cs s inside the class of all \Cs s.

I thank Anatoly Vershik for several useful conversations on topics of this paper, and I thank Stuart White for explaining a construction contained in Proposition~\ref{prop1}~(ii). 

\section{Preliminaries} \label{Sec:prelim}

\noindent We review here background material needed to prove the main results of our paper. 

The simplex of tracial states on a unital \Cs{} $\cB$ is denoted by $T(\cB)$. To each $\tau \in T(\cB)$ one has the GNS representation $\pi_\tau$ of $\cB$ on a Hilbert space $H_\tau$ and its associated von Neumann algebra $\pi_\tau(\cB)'' \subseteq B(H_\pi)$. The trace $\tau$ extends to a (faithful) tracial state on $\pi_\tau(\cB)''$, which therefore is a finite von Neumann algebra. Moreover, $\pi_\tau(\cB)''$ is a factor if and only if $\tau$ belongs to $\partial_\mathrm{e} T(\cB)$, the set of extreme points in $T(\cB)$.  In this case $\tau$ is said to be of type I$_n$, $1 \le n < \infty$, respectively, of type II$_1$, if $\pi_\tau(\cB)''$ is a factor of type I$_n$, respectively, of type II$_1$. 

As mentioned in the introduction, a separable RFD just-infinite \Cs{} $\cB$  has countably many non-zero primitive ideals $\{I_j\}_{j=0}^\infty$; and  each quotient $\cB/I_j$ is isomorphic to  a full matrix algebra, say $M_{k_j}$, for some integer $k_j \ge 1$.  Let $\pi_j \colon \cB \to \cB/I_j$ be the quotient mapping, and let $\tau_j$ be the tracial state obtained by composing $\pi_j$ with the unique normalized tracial state on $\cB/I_j \cong M_{k_j}$. Then $\pi_{\tau_j}(\cB) = \pi_{\tau_j}(\cB)'' = M_{k_j}$, which shows that $\tau_j$ is an extremal trace on $\cB$ of type I$_{k_j}$. 

\begin{proposition} \label{prop:typesoftraces}
 Let $\cB$ be a separable unital RFD just-infinite \Cs, and let $\{\tau_j\}_{j=0}^\infty$ be the extremal traces on $\cB$ defined above. If $\tau$ is an extremal trace on $\cB$, then either $\tau = \tau_j$, for some $j \ge 0$, in which case $\tau$ is of type I$_{k_j}$, or $\tau$ is of type II$_1$.
\end{proposition}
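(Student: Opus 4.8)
The plan is to combine the classification of finite factors with the rigid primitive-ideal structure of a separable RFD just-infinite \Cs. Let $\tau$ be an extremal trace on $\cB$. Since $\tau$ is extreme in $T(\cB)$, the von Neumann algebra $M := \pi_\tau(\cB)''$ is a factor, and as recalled in the preliminaries $\tau$ extends to a faithful normal tracial state on $M$, so $M$ is a \emph{finite} factor. By the type classification of finite factors, $M$ is therefore either of type I$_n$ for some $n < \infty$ or of type II$_1$. In the latter case $\tau$ is by definition of type II$_1$ and the second alternative of the proposition holds, so it remains to treat the case where $M \cong M_n$ is of type I$_n$ and to show that then $\tau = \tau_j$ for some $j$.

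So assume $M = \pi_\tau(\cB)''$ is of type I$_n$, whence $M \cong M_n$ is finite-dimensional. Since the image of a \Cs{} under a \sh{} is norm-closed, $\pi_\tau(\cB)$ is a $C^*$-subalgebra of $M \cong M_n$ and is in particular finite-dimensional. Now a finite-dimensional subspace of $B(H_\tau)$ is weakly closed; and since $\pi_\tau(\cB)$ is a unital $^*$-subalgebra it is weakly dense in its bicommutant $M$. Combining these, $\pi_\tau(\cB) = M$, so $\pi_\tau \colon \cB \to M_n$ is a surjective \sh.

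To locate $\ker \pi_\tau$ I would compose $\pi_\tau$ with the (unique up to unitary equivalence) irreducible representation of $M_n$ on $\C^n$; this produces an irreducible representation of $\cB$ with kernel exactly $\ker \pi_\tau$, so $\ker \pi_\tau$ is a primitive ideal of $\cB$ with $\cB/\ker \pi_\tau \cong M_n$. As $\cB$ is infinite-dimensional while $M_n$ is not, $\ker \pi_\tau \neq 0$; hence $\ker \pi_\tau = I_j$ for some $j \ge 0$ in the enumeration of the non-zero primitive ideals, and consequently $n = k_j$. Finally, writing $\xi_\tau$ for the GNS cyclic vector, $\pi_\tau(a) = 0$ forces $\tau(a) = \langle \pi_\tau(a)\xi_\tau, \xi_\tau\rangle = 0$, so $\tau$ vanishes on $I_j$ and descends to a tracial state on $\cB/I_j \cong M_{k_j}$. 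The full matrix algebra $M_{k_j}$ carries a unique tracial state, namely the normalized trace, so this induced trace is the normalized one and $\tau = \tau_j$, of type I$_{k_j}$, as claimed.

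The only genuinely delicate points are the two structural facts just used, and neither is hard: that a type I$_n$ factor image forces $\pi_\tau(\cB)$ to be \emph{all} of $M_n$ rather than merely weakly dense in it (handled by finite-dimensionality, which makes the norm and weak closures coincide), and that $\ker \pi_\tau$ must lie among the distinguished ideals $\{I_j\}$ (handled by recognizing it as a non-zero primitive ideal and invoking the description of $\Prim(\cB)$ recalled in the introduction). The dichotomy between the two cases is then immediate from the finite-factor classification, so I anticipate no further obstacle.
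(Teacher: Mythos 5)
Your proof is correct, but it organizes the dichotomy differently from the paper. The paper splits on whether $I=\ker\pi_\tau$ is zero: if $I\ne 0$, the just-infinite hypothesis immediately gives that $\cB/I$ is finite dimensional, so $\pi_\tau(\cB)=\pi_\tau(\cB)''\cong\cB/I$ is a finite-dimensional factor, hence a full matrix algebra, whence $I$ is primitive, $I=I_j$, and $\tau=\tau_j$; if $I=0$, then $\pi_\tau(\cB)''$ is an infinite-dimensional finite factor, hence of type II$_1$. You instead split on the type of the finite factor $M=\pi_\tau(\cB)''$ via the classification of finite factors, and in the type I$_n$ case you run the implication in the reverse direction: finite dimensionality of $M$ forces $\pi_\tau(\cB)=M$, the kernel is then a primitive ideal (compose with the irreducible representation of $M_n$) which is nonzero because $\cB$ is infinite dimensional, so it equals some $I_j$ by the recalled description of $\Prim(\cB)$, and uniqueness of the trace on $M_{k_j}$ gives $\tau=\tau_j$. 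The ingredients coincide (extremality gives a factor, traciality makes it finite, uniqueness of the matrix-algebra trace, and the structure of the primitive ideal space), but your route never invokes the defining property of just-infiniteness---that proper quotients are finite dimensional---using only the primitive-ideal picture plus infinite dimensionality, at the price of the two small extra steps you flag (weak closedness of finite-dimensional subspaces, and recognizing the kernel as primitive); the paper's split gets finite dimensionality for free from just-infiniteness and is correspondingly a line or two shorter.
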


\begin{proof} Let $I$ be the kernel of the GNS-representation $\pi_{\tau}$. If $I \ne 0$, then $\cB/I$ is finite dimensional, in which case $\pi_\tau(\cB)'' = \pi_\tau(\cB)$ is isomorphic to $\cB/I$. As $\tau$ is extreme, $\cB/I$ is a factor, and therefore necessarily a full matrix algebra. Hence $I$ is a primitive ideal, so $I = I_j$, for some $j \ge 0$, which again entails that $\tau = \tau_j$, because the trace on a full matrix algebra is unique. Suppose next that $I=0$. Then $\pi_\tau(\cB)''$ is an infinite dimensional finite factor, which entails that it is a factor of type II$_1$.
\end{proof}

\noindent To prove some of our results about the trace simplex of a RFD just-infinite \Cs{} we need the following standard approximate intertwining result, stated here for compact convex sets. For completeness of the exposition  we include its proof. 
Equip the set of functions between compact metric spaces $K$ and $K'$ with the uniform metric: $d_\infty(f,g) = \sup \{d_{K'}(f(x),g(x)) : x \in K\}$.

\begin{proposition} \label{prop:intertwining}
Suppose we have a system of two inverse limits of compact convex metric spaces:
$$
\xymatrix@C-0.7pc@R+0.5pc{K_0 && \ar[ll]_-{f_0} \ar[dl]^-{\rho_0} K_1 && \ar[ll]_-{f_1} \ar[dl]^-{\rho_1} K_2 && \ar[ll]_-{f_2} \cdots & \ar[l] \ar@{-->}[d]<-.5ex>_-\rho \ K \\
& K'_0 \ar[ul]_-{\rho'_0} && \ar[ll]^-{f'_0} \ar[ul]_-{\rho'_1} K'_1 && \ar[ll]^-{f'_1} \ar[ul]_-{\rho'_2} K'_2 & \ar[l] \cdots & \ar[l] \ar@{-->}[u]<-.5ex>_-{\rho'=\rho^{-1}}  K',}
$$
where $f_j$ and  $f'_j$ are contractive affine maps, where $K$ and $K'$ are the inverse limits of the sequences in the top and bottom rows, respectively, and where $\rho_j, \rho'_j$ are contractive affine maps making the diagram above an \emph{approximate intertwining}, i.e., 
\begin{equation} \label{eq:intertwining}
\sum_{j=0}^\infty d_\infty(\rho'_j \circ \rho_{j}, f_j) < \infty, \qquad 
\sum_{j=0}^\infty d_\infty(\rho_j \circ \rho'_{j+1}, f'_j) < \infty.
\end{equation}
Then there exists an affine homeomorphism $\rho \colon K \to K'$, with inverse $\rho'$.
\end{proposition}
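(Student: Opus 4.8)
The plan is to run Elliott's approximate-intertwining argument, realizing $K$ and $K'$ concretely as spaces of coherent sequences. Write $K$ as the set of $(x_j)_{j\ge 0}$ with $x_j\in K_j$ and $f_j(x_{j+1})=x_j$, equipped with the coordinate projections $\pi_n\colon K\to K_n$, which satisfy $f_n\circ\pi_{n+1}=\pi_n$ and jointly separate points; set up $K'$ and its projections $\pi'_n$ the same way. Recall from the diagram that $f_j\colon K_{j+1}\to K_j$, $f'_j\colon K'_{j+1}\to K'_j$, $\rho_j\colon K_{j+1}\to K'_j$ and $\rho'_j\colon K'_j\to K_j$, and that all these maps are contractive affine, so that pre- and post-composition by them never increases $d_\infty$.

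First I would build $\rho\colon K\to K'$ one coordinate at a time. For $n>j$ set
$$\psi_{j,n} \;=\; f'_j\circ f'_{j+1}\circ\cdots\circ f'_{n-2}\circ\rho_{n-1}\circ\pi_n\colon K\longrightarrow K'_j,$$
the map that climbs to level $n$ in the top row, crosses to $K'_{n-1}$ via $\rho_{n-1}$, and descends in the bottom row. The crux is to show that, for fixed $j$, the sequence $(\psi_{j,n})_n$ is Cauchy in $d_\infty$. Comparing consecutive terms, one peels $f'_{n-1}$ off $\psi_{j,n+1}$, replaces $f'_{n-1}\circ\rho_n$ by $\rho_{n-1}\circ(\rho'_n\circ\rho_n)$ using the second estimate in \eqref{eq:intertwining}, then replaces $\rho'_n\circ\rho_n$ by $f_n$ using the first; since $f_n\circ\pi_{n+1}=\pi_n$, the resulting map is exactly $\psi_{j,n}$. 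As every map in sight is contractive, this yields
$$d_\infty(\psi_{j,n+1},\psi_{j,n}) \;\le\; d_\infty(f'_{n-1},\rho_{n-1}\circ\rho'_n)+d_\infty(\rho'_n\circ\rho_n,f_n),$$
whose sum over $n$ is finite. Hence $\psi_{j,n}\to\rho^{(j)}\colon K\to K'_j$ uniformly, and the limit is again contractive and affine.

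The families $(\rho^{(j)})_j$ are coherent: the identity $f'_j\circ\psi_{j+1,n}=\psi_{j,n}$ holds on the nose for every $n$, so passing to the limit gives $f'_j\circ\rho^{(j+1)}=\rho^{(j)}$. Thus $\rho(x):=(\rho^{(j)}(x))_j$ is a well-defined element of $K'$ and defines a contractive affine (hence continuous) map $\rho\colon K\to K'$ with $\pi'_j\circ\rho=\rho^{(j)}$. The entirely symmetric construction—climbing in the bottom row, crossing via $\rho'_n$, descending via the $f_k$—produces a contractive affine $\rho'\colon K'\to K$ with $\pi_j\circ\rho'=\lim_n(f_j\circ\cdots\circ f_{n-1})\circ\rho'_n\circ\pi'_n$.

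The remaining, and most delicate, step is to verify $\rho'\circ\rho=\mathrm{id}_K$ and $\rho\circ\rho'=\mathrm{id}_{K'}$; since the $\pi_j$ jointly separate points, it suffices to check $\pi_j\circ\rho'\circ\rho=\pi_j$ for each $j$. Here I would use that $\pi'_n\circ\rho=\rho^{(n)}$ is uniformly within $\delta_n:=\sum_{m>n}d_\infty(\psi_{n,m+1},\psi_{n,m})$ of the single crossing map $\psi_{n,n+1}=\rho_n\circ\pi_{n+1}$, with $\delta_n\to0$ as a tail of a convergent series. Feeding this in, contractivity shows that $(f_j\circ\cdots\circ f_{n-1})\circ\rho'_n\circ\rho^{(n)}$ differs from $(f_j\circ\cdots\circ f_{n-1})\circ(\rho'_n\circ\rho_n)\circ\pi_{n+1}$ by at most $\delta_n$, after which $\rho'_n\circ\rho_n\approx f_n$ together with $f_j\circ\cdots\circ f_{n-1}\circ f_n\circ\pi_{n+1}=\pi_j$ forces the expression to converge to $\pi_j$. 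Hence $\pi_j\circ\rho'\circ\rho=\pi_j$ for all $j$, giving $\rho'\circ\rho=\mathrm{id}_K$, and the symmetric computation gives $\rho\circ\rho'=\mathrm{id}_{K'}$, so $\rho$ is the desired affine homeomorphism. The main obstacle throughout is organizing the telescoping so that exactly the two summable error terms of \eqref{eq:intertwining} appear at each stage; once the Cauchy estimate is in place, the rest is careful bookkeeping with contractive maps.
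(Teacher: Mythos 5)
Your proposal is correct and takes essentially the same route as the paper: your crossing maps $\psi_{j,n}$ are exactly the paper's $\sigma_{j,n-1}=f'_{n-1,j}\circ\rho_{n-1}\circ f_{\infty,n}$, the Cauchy estimate comes from the same telescoping use of \eqref{eq:intertwining} plus contractivity, and the limits assemble into coherent affine maps factoring through the two inverse limits. The only difference is cosmetic: in verifying $\rho'\circ\rho=\mathrm{id}_K$ you use the tail bound $\delta_n$ on $d_\infty(\rho^{(n)},\rho_n\circ\pi_{n+1})$ together with one application of the first estimate (a single limit), whereas the paper organizes the same computation as a double limit with an explicit telescoping inequality.
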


\begin{proof} For $0 \le i < j$, let $f_{j,i} \colon K_j \to K_i$ and $f'_{j,i} \colon K'_j \to K'_i$ be the contractive affine maps obtained by composing the maps $f_n$ and $f'_n$, respectively, and let $f_{\infty,j} \colon K \to K_j$ and  $f'_{\infty,j} \colon K' \to K'_j$ be the continuous affine maps associated with the two inverse limits. For each $0 \le i < j$, define $\sigma_{i,j} \colon K \to K'_i$ and $\sigma'_{i,j} \colon K' \to K_i$ by
$$
\sigma_{i,j} = f'_{j,i} \circ \rho_{j} \circ f_{\infty,j+1}, \qquad 
\sigma'_{i,j} = f_{j,i} \circ \rho'_{j} \circ f'_{\infty,j}.
$$
It follows from \eqref{eq:intertwining} that the sequences $\{\sigma_{i,j}\}_{j=i+1}^\infty$ and $\{\sigma'_{i,j}\}_{j=i+1}^\infty$ are Cauchy, and hence convergent, with respect to the uniform metric $d_\infty$. Their limits $\sigma_i \colon K \to K'_i$ and $\sigma'_i \colon K' \to K_i$ are continuous affine maps satisfying $f'_i \circ \sigma_{i+1} = \sigma_i$ and $f_i \circ \sigma'_{i+1} =\sigma'_i$, for all $i \ge 0$.  Therefore they factor through continuous affine maps $\rho \colon K \to K'$ and $\rho' \colon K' \to K$, that is, $f'_{\infty,i} \circ \rho = \sigma_i$ and $f_{\infty,i} \circ \rho' = \sigma'_i$, for all $i \ge 0$. 

We must still show that $\rho$ and $\rho'$ are inverses to each other; and to do so we show that $\rho' \circ \rho = \mathrm{id}_K$ (it follows in a similar way that $\rho \circ \rho' = \mathrm{id}_{K'}$). By properties of inverse limits, if $x,y \in K$ are such that $f_{\infty,i}(x) = f_{\infty,i}(y)$ for all $i$, then $x=y$. Hence it suffices to show that $(f_{\infty,i} \circ \rho \circ \rho') (x) = f_{\infty,i}(x)$ for all $x \in K$ and all $i$. Now,
\begin{eqnarray*}
(f_{\infty,i} \circ \rho' \circ \rho) (x) &=& (\sigma'_i \circ \rho)(x) \\
&=& \lim_{j \to \infty} (\sigma'_{i,j} \circ \rho)(x) \\
&=& \lim_{j \to \infty} (f_{j,i} \circ \rho'_j \circ f'_{\infty,j} \circ \rho)(x)\\
&=& \lim_{j \to \infty} (f_{j,i} \circ \rho'_j \circ \sigma_j)(x)\\
&=& \lim_{j \to \infty}\lim_{k \to \infty} (f_{j,i} \circ \rho'_j \circ \sigma_{j,k})(x)\\
&=& \lim_{j \to \infty}\lim_{k \to \infty} (f_{j,i} \circ \rho'_j \circ f'_{k,j} \circ \rho_k \circ f_{\infty,k+1})(x).
\end{eqnarray*}
(In the limits above---and below---it is understood that $k > j > i$.) By (2.1) it follows that 
$$ \lim_{j \to \infty}\lim_{k \to \infty} d_\infty(\rho'_j \circ f'_{k,j} \circ \rho_k, f_{j,k})=0.$$
Indeed, by the triangle inequality, one has
$$d_\infty(\rho'_j \circ f'_{k,j} \circ \rho_k, f_{j,k}) \le    \sum_{\ell = j}^{k-1} d_\infty(\rho'_\ell \circ \rho'_{\ell},f_\ell) + \sum_{\ell = j}^{k-2} d_\infty(\rho_\ell \circ \rho'_{\ell+1},f'_\ell).$$
It follows that
\begin{eqnarray*}
(f_{\infty,i} \circ \rho' \circ \rho) (x) &=& \lim_{j \to \infty}\lim_{k \to \infty} (f_{j,i} \circ \rho'_j \circ f'_{k,j} \circ \rho_k \circ f_{\infty,k+1})(x)\\ 
&=& (f_{j,i} \circ f_{j,k} \circ f_{\infty,k+1})(x) \\
&=& f_{\infty,i}(x),
\end{eqnarray*}
as desired.
\end{proof}

\noindent We immediately get the following corollary from the previous proposition.

\begin{corollary} \label{cor:intertwining}
Suppose we have an approximate intertwining of two inverse limits of compact convex metric spaces:
$$
\xymatrix@C-0.7pc@R+0.5pc{K_0 \ar@{=}[d] && \ar[ll]_-{f_0}  \ar@{=}[d]  K_1 && \ar[ll]_-{f_1} 
\ar@{=}[d] K_2 && \ar[ll]_-{f_2} \cdots && \ar[ll] \ar@{-->}[d]<-.5ex>_-\rho K \\
K_0  && \ar[ll]^-{f'_0}  K_1 && \ar[ll]^-{f'_1}  K_2 && \ar[ll]^-{f'_2} \cdots && \ar[ll] \ar@{-->}[u]<-.5ex>_-{\rho'=\rho^{-1}}  K',}
$$
where $f_j, f'_j$ are contractive affine maps satisfying $\sum_{j=0}^\infty d_\infty(f_j, f'_j) < \infty$.
Then there exists an affine homeomorphism $\rho \colon K \to K'$, with inverse $\rho'$.
\end{corollary}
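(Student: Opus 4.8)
The plan is to deduce this directly from Proposition~\ref{prop:intertwining} by specializing that statement to the case where the two inverse systems share the same sequence of spaces. Concretely, I would invoke Proposition~\ref{prop:intertwining} with $K'_j := K_j$ for every $j$, with the bottom connecting maps taken to be the given $f'_j$, and with the slanted intertwining maps chosen as $\rho'_j := \mathrm{id}_{K_j} \colon K'_j \to K_j$ and $\rho_j := f_j \colon K_{j+1} \to K_j = K'_j$. Both families are contractive affine maps: the $f_j$ by hypothesis, and the identities trivially. The only point requiring care is matching the directions and indices of the slanted maps in Proposition~\ref{prop:intertwining}: there $\rho_j$ runs from $K_{j+1}$ to $K'_j$, so one cannot literally use an identity in that slot, and instead the top connecting map $f_j$ plays that role, its codomain $K_j$ being identified with $K'_j$.

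With these choices the two summability conditions in \eqref{eq:intertwining} become immediate. For the first, $\rho'_j \circ \rho_j = \mathrm{id}_{K_j} \circ f_j = f_j$, so that $d_\infty(\rho'_j \circ \rho_j, f_j) = 0$ for every $j$ and the first series vanishes identically. For the second, $\rho_j \circ \rho'_{j+1} = f_j \circ \mathrm{id}_{K_{j+1}} = f_j$, so that $d_\infty(\rho_j \circ \rho'_{j+1}, f'_j) = d_\infty(f_j, f'_j)$, whence the second series equals $\sum_{j=0}^\infty d_\infty(f_j, f'_j)$, which is finite precisely by the hypothesis of the corollary. Thus the hypotheses of Proposition~\ref{prop:intertwining} are met.

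Applying that proposition then yields an affine homeomorphism $\rho \colon K \to K'$ with inverse $\rho'$, which is exactly the assertion. I do not anticipate any genuine obstacle: the corollary is a clean specialization of the proposition, and its entire content is the observation that, once the diagonal maps are taken to be the identities and the top connecting maps, the corollary's single summability hypothesis reproduces the second of the two intertwining estimates while the first is satisfied for free.
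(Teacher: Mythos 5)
Your proposal is correct and matches the paper's intent exactly: the paper derives the corollary as an immediate specialization of Proposition~\ref{prop:intertwining}, and your choices $\rho'_j = \mathrm{id}_{K_j}$, $\rho_j = f_j$ (handling the index shift in the slanted maps) make the first intertwining sum vanish and turn the second into $\sum_j d_\infty(f_j,f'_j)$, precisely as required. No gaps.
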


\begin{remark} If we are given continuous (not necessarily contractive) affine maps $f_j$, $f'_j$ as in the corollary (or in the proposition) above, then one can always adjust the metrics on the spaces $K_n$ (and $K'_n$), $n \ge 0$, to make the maps $f_j$ and $f'_j$ contractive (and, in the case of Proposition~\ref{prop:intertwining} one can further make the affine maps $\rho_j$, $\rho'_j$ contractive). 
Again, given any continuous affine maps $f_j$, $f'_j$ as in the corollary above, one can also adjust the metrics on the spaces $K_n$ so that $\sum_{j=0}^\infty d_\infty(f_j, f'_j) < \infty$ holds. In general, one cannot do both! It is crucial that  \eqref{eq:intertwining} is satisfied with respect to contractive maps. 
\end{remark}

\noindent We end this section by describing the affine map between the trace simplices of finite dimensional \Cs s induced by a \sh. The proof of the lemma is straightforward and is omitted.

\begin{lemma} \label{lm:T(phi)}
Let 
$$\cA = \bigoplus_{j=0}^n M_{k_j}, \qquad \cB = \bigoplus_{i=0}^m M_{\ell_i}$$
be finite dimensional \Cs s, and let $\varphi \colon \cA \to \cB$ be a unital \sh{} with multiplicity matrix \footnote{If we identify $K_0(\cA)$ and $K_0(\cB)$ with $\Z^{n+1}$ and $\Z^{m+1}$, respectively,
then the multiplicity matrix $A$ of $\varphi$ is the $(m+1) \times(n+1)$ matrix over $\Z$ which represents the group homomorphism $K_0(\varphi) \colon \Z^{n+1} \to \Z^{m+1}$.} $A = (A(i,j))$, $i=0,1, \dots, m$, $j=0,1, \dots, n$, so that $A(i,j)$ is the multiplicity of the partial \sh{} $\varphi_{i,j} \colon M_{k_j} \to M_{\ell_i}$ (from the $j$th summand of $\cA$ to the $i$th summand of $\cB$).

Let $\{\tau_{\cA,j}\}_{j=0}^n$ and $\{\tau_{\cB,i}\}_{i=0}^m$ denote the extremal tracial states on $\cA$ and $\cB$ supported on the $j$th summand $M_{k_j}$ of $\cA$ and the $i$th summand $M_{\ell_i}$ of $\cB$, respectively, 
and let $T(\varphi) \colon T(\cB) \to T(\cA)$ be the affine homomorphism induced by $\varphi$.  Then
\begin{equation} \label{*}
T(\varphi)(\tau_{\cB,i}) = \tau_{\cB,i} \circ \varphi = \sum_{j=0}^n \frac{A(i,j)k_j}{\ell_i} \; \tau_{\cA,j},
\end{equation}
for all $i=0,1, \dots, m$.
\end{lemma}

\noindent Observe that $\sum_{j=0}^n A(i,j) k_j = \ell_i$, for all $i$, by the assumption that $\varphi$ is unital. This shows that the right-hand side of \eqref{*} indeed belongs to $T(\cA)$.

\section{The trace simplex of a just-infinite \Cs}

\label{sec:simplex}

\noindent For each $n \ge 0$, let $\Delta_n$ be the standard $n$-dimensional simplex with extreme boundary $\partial_{\mathrm{e}} \Delta_n = \{e_0^{(n)}, e_1^{(n)}, \dots, e_n^{(n)}\}$. 
It was shown by Lazar and Lindenstrauss, \cite[Corollary to Theorem 5.2]{LazLin-1971}, that each metrizable infinite dimensional Choquet simplex $\Delta$ is the inverse  limit of a  sequence 

\vspace{-.5cm}
\begin{equation} \label{eq0}
\xymatrix{\Delta_0 & \Delta_1 \ar[l]_{f_0} & \Delta_2 \ar[l]_{f_1} &  \Delta_3 \ar[l]_{f_2} & \cdots \ar[l] 
& \Delta \ar[l]
},
\end{equation}
where each $f_n$ is an affine surjective map. Since each extreme point of $\Delta_n$ lifts to an extreme point of $\Delta_{n+1}$ under any surjective affine map $\Delta_{n+1} \to \Delta_n$, we infer that 
\begin{equation} \label{eq1}
f_n(e_j^{(n+1)}) = e_j^{(n)}, \quad j=0,1, \dots, n; \qquad f_n(e_{n+1}^{(n+1)}) = \xi^{(n)},
\end{equation}
for some $\xi^{(n)} \in \Delta_n$ (possibly after relabelling the extreme points of the simplex $\Delta_n$).  The affine map $f_n$ is determined by \eqref{eq1}, so the Choquet simplex $\Delta$ is determined by the sequence $\{\xi^{(n)}\}_{n=0}^\infty$. Let $f_{\infty,n} \colon \Delta \to \Delta_n$ denote the canonical  continuous affine surjection associated with \eqref{eq0} satisfying $f_n \circ f_{\infty,n+1} = f_{\infty,n}$, for all $n \ge 0$.

\begin{lemma} \label{lm0} Let $\Delta$ be a Choquet simplex given as in \eqref{eq0} and \eqref{eq1} above (for some sequence of elements $\xi^{(n)} \in \Delta_n$, $n \ge 0$). Then, for each $n \ge 0$, there is a (unique) element $e_n \in \Delta$ satisfying $f_{\infty,m}(e_n) = e_n^{(m)}$, whenever $m \ge n$. Moreover, each $e_n$ is an extreme point of $\Delta$, and $\{e_n\}_{n=0}^\infty$  is dense in the extreme boundary of $\Delta$.
\end{lemma}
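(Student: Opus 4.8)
The plan is to realize $\Delta$ concretely as the space of compatible threads, namely
$\Delta = \{(x_m)_{m\ge 0} : x_m \in \Delta_m,\ f_m(x_{m+1}) = x_m \text{ for all } m\}$,
with $f_{\infty,m}$ the $m$-th coordinate projection, and to use the three standard facts recalled in the proof of Proposition~\ref{prop:intertwining}: a thread is uniquely determined by its coordinates, every compatible family of coordinates defines a (unique) point of $\Delta$, and $f_{\infty,p} = f_{m,p}\circ f_{\infty,m}$ for $p<m$, where $f_{m,p} = f_p\circ\cdots\circ f_{m-1}\colon \Delta_m\to\Delta_p$. To construct $e_n$, I would prescribe its coordinates to be $e_n^{(m)}$ for $m\ge n$ and, for $m<n$, the forced value $f_{n,m}(e_n^{(n)})$. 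Compatibility at levels $m\ge n$ is exactly the first relation in \eqref{eq1}, since $n\le m$ gives $f_m(e_n^{(m+1)}) = e_n^{(m)}$; compatibility below level $n$ holds by construction. This yields existence, and uniqueness is immediate since the coordinates $f_{\infty,m}(e_n)=e_n^{(m)}$ for $m\ge n$ determine all lower coordinates through the maps $f_{m,p}$, hence determine the thread.

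For extremality I would push a putative decomposition $e_n = t\,a + (1-t)\,b$ (with $0<t<1$, $a,b\in\Delta$) through $f_{\infty,m}$ for $m\ge n$, obtaining $e_n^{(m)} = t\,f_{\infty,m}(a) + (1-t)\,f_{\infty,m}(b)$. Since $e_n^{(m)}$ is a vertex of the simplex $\Delta_m$, extremality in $\Delta_m$ forces $f_{\infty,m}(a)=f_{\infty,m}(b)=e_n^{(m)}$ for every $m\ge n$, and the uniqueness just established gives $a=b=e_n$. Thus each $e_n$ lies in $\partial_{\mathrm{e}}\Delta$.

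The density statement is the real content, and I would deduce it from Milman's partial converse to the Krein--Milman theorem: it suffices to show $\Delta = \overline{\conv}\{e_n : n\ge 0\}$, for then $\partial_{\mathrm{e}}\Delta \subseteq \overline{\{e_n\}}$, which together with $\{e_n\}\subseteq\partial_{\mathrm{e}}\Delta$ gives exactly that $\{e_n\}$ is dense in $\partial_{\mathrm{e}}\Delta$. Given $x\in\Delta$, write its $m$-th coordinate in barycentric form $f_{\infty,m}(x) = \sum_{j=0}^m \lambda_j^{(m)} e_j^{(m)}$ and lift these coordinates to the convex combination $y^{(m)} := \sum_{j=0}^m \lambda_j^{(m)} e_j \in \conv\{e_0,\dots,e_m\}$. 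The key computation is that $y^{(m)}$ agrees with $x$ on all levels up to $m$: using $f_{\infty,m}(e_j)=e_j^{(m)}$ for $j\le m$ one gets $f_{\infty,m}(y^{(m)}) = f_{\infty,m}(x)$, and applying $f_{m,p}$ for $p\le m$ yields $f_{\infty,p}(y^{(m)}) = f_{\infty,p}(x)$. Hence for each fixed $p$ the coordinates $f_{\infty,p}(y^{(m)})$ are eventually (for $m\ge p$) equal to $f_{\infty,p}(x)$, so $y^{(m)}\to x$ in the inverse limit topology, giving $x\in\overline{\conv}\{e_n\}$.

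The main obstacle is the density clause; everything else is a routine manipulation of threads. The decisive trick is the level-wise agreement $f_{\infty,p}(y^{(m)})=f_{\infty,p}(x)$ for $p\le m$, which turns the abstract approximation of $x$ by convex combinations of the $e_n$ into an exact coordinate match that trivially converges, after which Milman's theorem does the rest.
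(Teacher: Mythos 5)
Your proof is correct and follows essentially the same route as the paper's: existence and uniqueness of $e_n$ from the thread description of the inverse limit, extremality by pushing a convex decomposition through the coordinate maps $f_{\infty,m}$, and density via Milman's partial converse after showing that the convex hull of $\{e_n\}_{n=0}^\infty$ is dense in $\Delta$. Your explicit lifting $y^{(m)} = \sum_{j=0}^m \lambda_j^{(m)} e_j$ and the level-wise agreement $f_{\infty,p}(y^{(m)}) = f_{\infty,p}(x)$ for $p \le m$ are precisely the details behind the paper's one-line observation that $f_{\infty,m}(\conv\{e_n : n \ge 0\}) = \Delta_m$ for all $m$ forces this convex hull to be dense.
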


\begin{proof} The existence of $e_n \in \Delta$ follows from \eqref{eq1} and standard properties of inverse limits; and its uniqueness from the fact that if $x,y \in \Delta$ are such that $f_{\infty,m}(x) = f_{\infty,m}(y)$, for all sufficiently large $m$, then $x=y$. Since $f_{\infty,m}(e_n)$ is an extreme point in $\Delta_m$, for all $m \ge n$, $e_n$ must itself be an extreme point in $\Delta$. By Milman's partial converse to the Krein--Milman theorem, to show that $\{e_n\}_{n=0}^\infty$ is dense in the extreme boundary of $\Delta$, it suffices to show that the convex hull, $C$, of $\{e_n\}_{n=0}^\infty$ is dense in $\Delta$. However, $f_{\infty,m}(C) = \Delta_m$, for all $m \ge 0$ (since $f_{\infty,m}(C)$ is a convex sets that contains all the extreme points of $\Delta_m$), and this shows that $C$ is dense in $\Delta$.
\end{proof}

\noindent
Consider  an AF-algebra $\cA = \varinjlim (\cA_n, \varphi_n)$, whose Bratteli diagram is given as follows:
\begin{center}
\begin{equation} \label{eq:Bratteli}
\xymatrix@R-1.3pc{\bullet \ar@{-}[dd] \ar@{--}[ddr] & & & & \\ & & & &\\
\bullet \ar@{-}[dd] \ar@{--}[ddrr] & \bullet  \ar@{-}[dd] \ar@{--}[ddr] && &\\  & & & &\\
\bullet  \ar@{-}[dd] \ar@{--}[ddrrr]& \bullet  \ar@{-}[dd] \ar@{--}[ddrr]& \bullet  \ar@{-}[dd] \ar@{--}[ddr]& & \\  & & & &\\
\bullet  \ar@{-}[dd] \ar@{--}[ddrrrr] & \bullet  \ar@{-}[dd] \ar@{--}[ddrrr] & \bullet  \ar@{-}[dd] \ar@{--}[ddrr]& \bullet  \ar@{-}[dd] \ar@{--}[ddr]& \\  & & & &\\
\bullet  & \bullet   & \bullet   & \bullet  & \bullet  \\  \vdots & \vdots & \vdots & \vdots & \vdots
} 
\hspace{1.5cm}
\xymatrix@R-.53pc@C-2.3pc{\cA_0&=&M_{k_{0}} & && && &&& \\\cA_1&=&
M_{k_{0}}  & \oplus&M_{k_{1}} &&& &&& \\ \cA_2&=&
M_{k_0}  & \oplus & M_{k_1}  &\oplus & M_{k_2} & && &\\ \cA_3&=&
M_{k_0}   & \oplus & M_{k_1}   & \oplus & M_{k_2}  & \oplus  
& M_{k_3} & & \\ \cA_4&=&
M_{k_0}   & \oplus & M_{k_1}  & \oplus  & M_{k_2} & \oplus  & M_{k_3} & \oplus  & M_{k_4} \\ 
}
\end{equation}
\end{center}
where the dotted edge from the vertex at position $(n,j)$ to the vertex at position $(n+1,n+1)$ has multiplicity $m^{(n)}_j \ge 0$. The unbroken edges all have multiplicity $1$. The  connecting maps $\varphi_n \colon \cA_n \to \cA_{n+1}$ are unital, and hence determined---up to unitary equivalence---by the Bratteli diagram. Let $\varphi_{\infty,n} \colon \cA_n \to \cA$ denote the canonical inductive limit \sh{} satisfying $\varphi_{\infty,n+1} \circ \varphi_n = \varphi_{\infty,n}$, for all $n \ge 0$.
The integer $k_0 \ge 1$ can be chosen arbitrarily, and the remaining integers $k_n$, appearing in \eqref{eq:Bratteli}, are determined by the formula
\begin{equation} \label{eq:k}
k_{n+1} = \sum_{j=0}^n m^{(n)}_j \, k_j.
\end{equation}
The Bratteli diagram \eqref{eq:Bratteli}, and hence the AF-algebra $\cA$, are thus determined by the choice of the initial integer $k_0 \ge 1$ and by the multiplicity vectors
\begin{equation} \label{eq:m}
m^{(n)} = \big(m_0^{(n)}, m_1^{(n)}, m_2^{(n)}, \dots, m_n^{(n)}\big), \qquad n \ge 0,
\end{equation}
each of which, moreover, is assumed to be non-zero (to ensure that $k_n \ge 1$, for each $n \ge 0$).

\begin{lemma} \label{lm:a} The AF-algebra $\cA$, described above, is unital and RFD. For each $j \ge 0$, there is a surjective \sh{} $\pi_j \colon \cA \to M_{k_j}$, making the diagram
$$\xymatrix@C-8pt@R-8pt{\cA_n \ar[dr]_-{\pi_j^{(n)}} \ar[rr]^-{\varphi_{\infty,n}} && \cA \ar[dl]^-{\pi_j} \\ & M_{k_j} &}$$
commutative, for each $n \ge j$, where $\pi_j^{(n)}$ is the projection onto the $j$th summand of $\cA_n$.

If each multiplicity $m_j^{(n)}$ is non-zero, then $\cA$ is just-infinite, and the primitive ideal space of $\cA$ consists of $0$ and of the ideals $\mathrm{ker}(\pi_j)$, $j \ge 0$. In particular, the characteristic sequence for $\cA$ is precisely the sequence $\{k_j\}_{j=0}^\infty$ defined in (and above) \eqref{eq:k}.
\end{lemma}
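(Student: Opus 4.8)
The plan is to treat the two assertions separately, since the first paragraph of the statement does not require the individual multiplicities $m_j^{(n)}$ to be non-zero. For the existence of $\pi_j$ I would exploit the special shape of the diagram: for $j \le n$ the only edge of \eqref{eq:Bratteli} entering the vertex $(n+1,j)$ is the unbroken, multiplicity-one edge from $(n,j)$, so that $\pi_j^{(n+1)} \circ \varphi_n = \pi_j^{(n)}$ as maps $\cA_n \to M_{k_j}$ for every $n \ge j$ (after the usual normalisation of the connecting maps). The family $\{\pi_j^{(n)}\}_{n \ge j}$ is therefore compatible with the inductive system, and the universal property of $\cA = \varinjlim(\cA_n,\varphi_n)$ produces the desired \sh{} $\pi_j \colon \cA \to M_{k_j}$ with $\pi_j \circ \varphi_{\infty,n} = \pi_j^{(n)}$; it is surjective because already $\pi_j^{(j)}$ is. Unitality is inherited from the unital building blocks and the unital connecting maps. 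To get RFD I would observe that $\bigcap_{j} \ker \pi_j$ is a closed ideal meeting each $\cA_n$ in $\bigcap_{j=0}^n \ker \pi_j^{(n)} = 0$; since a non-zero closed ideal of an AF-algebra always meets some $\cA_n$ non-trivially, this intersection is $0$, so $\{\pi_j\}$ is a separating family of finite-dimensional representations.

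For the second paragraph I would run everything through the standard bijection between closed two-sided ideals of $\cA$ and the sequences $\{S_n\}$, $S_n \subseteq \{0,1,\dots,n\}$, of vertex sets that are hereditary ($S_n \subseteq S_{n+1}$) and saturated under the edges of \eqref{eq:Bratteli}; here the ideal is $I = \overline{\bigcup_n \varphi_{\infty,n}(I_n)}$ with $I_n = \bigoplus_{j \in S_n} M_{k_j}$. The hypothesis $m_j^{(n)} \ne 0$ enters decisively: saturation at a vertex $(n,j)$ forces both $(n+1,j)$ (unbroken edge) and the diagonal vertex $(n+1,n+1)$ (dotted edge) into $S_{n+1}$. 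Hence if $I \ne 0$, so that some $j_0 \in S_{n_0}$, then an induction along the diagonal shows $\{n_0+1, n_0+2, \dots, m\} \subseteq S_m$ for all $m > n_0$; equivalently the set $F = \{j : j \in S_n \text{ for large } n\}$ of absorbed columns is co-finite. The crux of the argument, and the step I expect to be the main obstacle, is then to read off the quotient: since $S_n \cap \{0,\dots,n_0\}$ is increasing and bounded it stabilises, so for large $n$ the complementary vertex set $\{0,\dots,n\} \setminus S_n$ equals the fixed finite set $F^{c} \subseteq \{0,\dots,n_0\}$, and along this tail each surviving column is joined only to itself with multiplicity one (the dotted edges all lead into the absorbed diagonal). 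Thus $\cA/I \cong \bigoplus_{j \in F^{c}} M_{k_j}$ is finite-dimensional. Together with the fact that $k_{n+1} = \sum_{j=0}^n m_j^{(n)} k_j$ is strictly increasing for $n \ge 1$, which forces $k_n \to \infty$ and hence $\dim \cA = \infty$, this shows that $\cA$ is just-infinite.

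Finally I would identify the primitive ideal space from the same picture. The ideal $\ker \pi_j$ corresponds to $S_n = \{0,\dots,n\} \setminus \{j\}$ (so $F^{c} = \{j\}$), giving $\cA/\ker \pi_j \cong M_{k_j}$; these ideals are pairwise distinct and each is primitive, since its quotient is a full matrix algebra. Conversely a non-zero ideal $I$ is primitive exactly when $\cA/I \cong \bigoplus_{j \in F^{c}} M_{k_j}$ is a single full matrix algebra, i.e.\ when $|F^{c}| = 1$, which returns precisely the ideals $\ker \pi_j$. That $0$ itself is primitive follows because $\cA$ is prime --- any two non-zero ideals have co-finite absorbed-column sets $F_1, F_2$, whose intersection is co-finite and thus non-empty, so the corresponding ideals satisfy $I_1 \cap I_2 \ne 0$ --- together with the fact that prime ideals are primitive in the separable case. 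The characteristic sequence is then $\{k_j\}_{j=0}^\infty$ by its very definition as the list of dimensions $\cA/I \cong M_{k_j}$ over the non-zero primitive ideals.
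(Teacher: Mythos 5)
Your proof is correct, and in substance it is the paper's own argument. For the first paragraph you do exactly what the paper does: the unbroken multiplicity-one edges give $\pi_j^{(n+1)} \circ \varphi_n = \pi_j^{(n)}$, hence the limit maps $\pi_j$, and the family $\{\pi_j\}$ is separating because $\bigoplus_{j=0}^n \pi_j^{(n)}$ is injective on each $\cA_n$ (the paper phrases this as $\bigoplus_j \pi_j$ being isometric on the dense union, while you invoke the fact that a non-zero closed ideal of an inductive limit must meet some $\varphi_{\infty,n}(\cA_n)$; these are interchangeable). The difference lies in the second paragraph: the paper does not prove just-infiniteness inside this lemma at all --- it defers to the more general Theorem~\ref{thm:ji-Bratteli} (property (RFD-JI)), proved later, and to the special case in \cite{GMR:JI}. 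Your inline argument is essentially the specialization of that theorem's proof to the triangular diagram \eqref{eq:Bratteli}: ideals correspond to increasing, edge-saturated vertex sets, the non-vanishing dotted multiplicities force the absorbed column set to be co-finite, the complement stabilizes to a finite set $F^c$, and along the tail the surviving columns are joined only by multiplicity-one edges, so $\cA/I \cong \bigoplus_{j \in F^c} M_{k_j}$. The one point where you genuinely depart from the paper is the primitivity of $0$: Theorem~\ref{thm:ji-Bratteli} quotes \cite[Lemma 3.2]{GMR:JI} (the zero ideal is prime in any just-infinite $C^*$-algebra), whereas you prove primeness directly --- any two non-zero ideals have co-finite absorbed column sets, hence eventually contain a common summand $\varphi_{\infty,n}(\cA_{n,j})$, so they intersect non-trivially --- and then use separability. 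This makes your write-up self-contained where the paper relies on a citation and on a later theorem, at the cost of repeating work the paper packages once in greater generality.
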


\begin{proof}  By the assumptions on the connecting maps $\varphi_n$, we see that $\pi_j^{(n+1)} \circ \varphi_n = \pi_j^{(n)}$, for all $n \ge j$, so there exists a \sh{} $\pi_j \colon \cA \to M_{k_j}$, making the diagram in the lemma commutative. As $\bigoplus_{j=0}^n \pi_j^{(n)}$ obviously is injective on $\cA_n$, for each $n \ge 0$, it follows that $\bigoplus_{j=0}^\infty \pi_j$ is injective (and hence isometric) on $\bigcup_{n=0}^\infty \varphi_{\infty,n}(\cA_n)$; and therefore isometric (and hence injective) on $\cA$. This shows that $\cA$ is RFD. 

The proof that $\cA$ is just-infinite with the stipulated primitive ideal space is analogous to the proof that the \Cs{} constructed in \cite[Section 4.1]{GMR:JI} has these property. (In fact, the \Cs{} constructed therein is precisely our \Cs{} $\cA$ in the case where $m^{(n)}_j = 1$, for all $n,j$.) These facts are also stated and proved explicitly in the more general result, Theorem~\ref{thm:ji-Bratteli}, proven later in this article. The last claim about $\{k_j\}$ follows by the definition of the characteristic sequence given above. 
\end{proof}

\noindent The trace simplex $T(\cA_n)$ has extremal points $\{\tau^{(n)}_j\}_{j=0}^n$, where $\tau^{(n)}_j$ is the normalized trace on the $j$th summand, $M_{k_j}$, of $\cA_n$. We therefore have affine isomorphisms $\chi_n \colon \Delta_n \to T(\cA_n)$  such that $\chi_n(e^{(n)}_j) =\tau^{(n)}_j$, for all $j$. Under this identification we obtain surjective affine maps $f_n \colon \Delta_{n+1} \to \Delta_n$, $n \ge 0$, and an affine homeomorphism $\chi$ making the diagram
\begin{equation}  \label{eq:b}
\xymatrix@C+.5pc{\Delta_0 \ar[d]_{\chi_0} & \Delta_1 \ar[l]_{f_0} \ar[d]_{\chi_1}& \Delta_2 \ar[l]_{f_1} \ar[d]_{\chi_2} &  \Delta_3 \ar[l]_{f_2} \ar[d]_{\chi_3} & \cdots \ar[l] & \Delta \ar[l]  \ar[d]^\chi \\
T(\cA_0) & T(\cA_1) \ar[l]^-{T(\varphi_0)} & T(\cA_2) \ar[l]^-{T(\varphi_1)} & T(\cA_3) \ar[l]^-{T(\varphi_2)} & \cdots \ar[l] & T(\cA) \ar[l]
}
\end{equation}
commutative. The lemma below, which states that the sequence in the upper row of \eqref{eq:b} is of the kind described in \eqref{eq0} and \eqref{eq1}, is just a restatement of Lemma~\ref{lm:T(phi)} in the case of the particular connecting mappings under consideration. 

\begin{lemma} \label{lm:b} In the notation of \eqref{eq:Bratteli} and \eqref{eq:b}, for each $n \ge 0$, we have
$$f_n(e_j^{(n+1)}) = e_j^{(n)}, \quad j = 0,1, \dots, n; \qquad f_n(e_{n+1}^{(n+1)}) = \zeta^{(n)},$$
where
$$\zeta^{(n)} = \Big(\frac{m_0^{(n)} k_0}{k_{n+1}}, \, \frac{m_1^{(n)} k_1}{k_{n+1}}, \cdots, \, \frac{m_n^{(n)} k_n}{k_{n+1}}\Big) \in \Delta_n.$$
\end{lemma}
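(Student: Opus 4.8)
The plan is to recognize that Lemma~\ref{lm:b} is essentially a direct computation using Lemma~\ref{lm:T(phi)}, specialized to the particular connecting maps $\varphi_n \colon \cA_n \to \cA_{n+1}$ coming from the Bratteli diagram \eqref{eq:Bratteli}. The key observation is that the diagram \eqref{eq:b} commutes, so the map $f_n$ is conjugate, via the affine isomorphisms $\chi_n$ and $\chi_{n+1}$, to the map $T(\varphi_n) \colon T(\cA_{n+1}) \to T(\cA_n)$. Concretely, I would verify that $f_n = \chi_n^{-1} \circ T(\varphi_n) \circ \chi_{n+1}$, and then evaluate both sides on each extreme point $e_\ell^{(n+1)}$ of $\Delta_{n+1}$ by transporting the question to the trace simplices.

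First I would read off the multiplicity matrix $A$ of $\varphi_n$ from the Bratteli diagram. Using the notation of \eqref{eq:Bratteli}, the source is $\cA_n = \bigoplus_{j=0}^n M_{k_j}$ and the target is $\cA_{n+1} = \bigoplus_{i=0}^{n+1} M_{k_i}$. The unbroken edges of multiplicity $1$ connect the $j$th summand of $\cA_n$ to the $j$th summand of $\cA_{n+1}$ for $j = 0, 1, \dots, n$, while the dotted edge from vertex $(n,j)$ to vertex $(n+1,n+1)$ contributes multiplicity $m_j^{(n)}$. Thus $A(i,j) = \delta_{i,j}$ for $i \le n$, and $A(n+1,j) = m_j^{(n)}$ for $j = 0, 1, \dots, n$.

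Next I would apply \eqref{*} of Lemma~\ref{lm:T(phi)} to compute $T(\varphi_n)(\tau_i^{(n+1)})$ for each $i$, recalling that $\tau_i^{(n+1)} = \chi_{n+1}(e_i^{(n+1)})$ and $\tau_j^{(n)} = \chi_n(e_j^{(n)})$. For $i = 0, 1, \dots, n$, the diagonal structure of $A$ gives $T(\varphi_n)(\tau_i^{(n+1)}) = \frac{A(i,i) k_i}{k_i}\, \tau_i^{(n)} = \tau_i^{(n)}$, since $\ell_i = k_i$ in this indexing; transporting through $\chi_n$ and $\chi_{n+1}$ yields $f_n(e_i^{(n+1)}) = e_i^{(n)}$, which is the first displayed equation. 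For $i = n+1$, we have $\ell_{n+1} = k_{n+1}$, so \eqref{*} gives $T(\varphi_n)(\tau_{n+1}^{(n+1)}) = \sum_{j=0}^n \frac{m_j^{(n)} k_j}{k_{n+1}}\, \tau_j^{(n)}$, which corresponds under $\chi_n$ precisely to the point $\zeta^{(n)}$ as defined in the statement.

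The main point to confirm, rather than a genuine obstacle, is that the coordinates of $\zeta^{(n)}$ sum to $1$, so that $\zeta^{(n)}$ indeed lies in $\Delta_n$; but this is immediate from the unitality relation \eqref{eq:k}, namely $k_{n+1} = \sum_{j=0}^n m_j^{(n)} k_j$, which is exactly the observation recorded after Lemma~\ref{lm:T(phi)}. Since all the ingredients are already in place, the proof is a matter of substituting the explicit multiplicity matrix into \eqref{*}, and so I would expect this lemma to follow without difficulty—indeed, as the excerpt notes, it is ``just a restatement of Lemma~\ref{lm:T(phi)}'' for these particular connecting maps.
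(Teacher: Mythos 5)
Your proposal is correct and follows exactly the route the paper intends: the paper gives no separate proof of Lemma~\ref{lm:b}, stating only that it is ``just a restatement of Lemma~\ref{lm:T(phi)}'' for these connecting maps, and your computation (reading off $A(i,j)=\delta_{i,j}$ for $i\le n$ and $A(n+1,j)=m_j^{(n)}$, substituting into \eqref{*}, and transporting through $\chi_n$, $\chi_{n+1}$) is precisely that restatement made explicit. The closing remark that $\zeta^{(n)}\in\Delta_n$ by the unitality relation \eqref{eq:k} matches the paper's observation following Lemma~\ref{lm:T(phi)}.
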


\noindent With $e_j$ the extreme points of $\Delta$ defined in Lemma~\ref{lm0}, we have $\chi(e_j) = \tau_j$, where $\tau_j$ is the extremal trace on $\cA$ obtained by composing the \sh{} $\pi_j \colon \cA \to M_{k_j}$ with the normalized trace on $M_{k_j}$. In particular, $\tau_j$ is of type I$_{k_j}$. It follows from Lemma~\ref{lm0} and \eqref{eq:b} that $\{\tau_j\}_{j=0}^\infty$ is dense in the set of extremal traces on $T(\cA)$, and that all other extremal traces are of type II$_1$. 

\begin{theorem} \label{thm:Simplex-ji} The following four statements are equivalent for each metrizable Choquet simplex $\Delta$:
\begin{enumerate}
\item $\Delta$ is infinite dimensional (i.e., $\Delta \ne \Delta_n$ for all $n \ge 0$).
\item There is a unital separable  infinite dimensional RFD \Cs{} whose trace simplex is affinely homeomorphic to $\Delta$.
\item There is a unital separable RFD just-infinite \Cs{} whose trace simplex is affinely homeomorphic to $\Delta$.
\item There is a unital separable RFD just-infinite AF-algebra, arising from a Bratteli diagram of the type described in \eqref{eq:Bratteli}, with $m_j^{(n)} \ge 1$, for all $n,j$, whose trace simplex is affinely homeomorphic to $\Delta$.
\end{enumerate}
\end{theorem}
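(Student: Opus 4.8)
The plan is to close the cycle of implications (i) $\Rightarrow$ (iv) $\Rightarrow$ (iii) $\Rightarrow$ (ii) $\Rightarrow$ (i). Two of these are immediate. For (iv) $\Rightarrow$ (iii), an AF-algebra arising from a diagram of the form \eqref{eq:Bratteli} with every $m_j^{(n)} \ge 1$ is, by Lemma~\ref{lm:a}, a unital separable RFD just-infinite \Cs. For (iii) $\Rightarrow$ (ii), any just-infinite \Cs{} is infinite dimensional by definition, so a unital separable RFD just-infinite \Cs{} already witnesses (ii). Thus the substance lies in the necessity (ii) $\Rightarrow$ (i) and in the main construction (i) $\Rightarrow$ (iv).

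For (ii) $\Rightarrow$ (i), let $\cB$ be unital, separable, infinite dimensional and RFD with $T(\cB) \cong \Delta$. I would first observe that $\cB$ has infinitely many mutually inequivalent finite dimensional irreducible representations: otherwise the separating family of finite dimensional representations supplied by the RFD property would embed $\cB$ into a finite direct sum of matrix algebras, forcing $\cB$ to be finite dimensional. Each finite dimensional irreducible representation $\pi \colon \cB \to M_k$ is surjective and is determined up to unitary equivalence by its kernel, and composing it with the normalized trace on $M_k$ gives an extremal trace of type I$_k$, as recalled in Section~\ref{Sec:prelim}. Inequivalent such representations have distinct kernels and therefore yield distinct extremal traces, so $\partial_{\mathrm{e}} T(\cB)$ is infinite; consequently $T(\cB)$ cannot equal any finite dimensional simplex $\Delta_n$, which is (i).

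The heart of the argument is (i) $\Rightarrow$ (iv). Using the Lazar--Lindenstrauss theorem I would present $\Delta$ as the inverse limit \eqref{eq0}, with surjective affine maps $f_n$ normalized as in \eqref{eq1}, so that $\Delta$ is encoded by a sequence of points $\xi^{(n)} \in \Delta_n$. I then construct a Bratteli diagram of the form \eqref{eq:Bratteli} inductively. Having fixed $k_0, \dots, k_n$, I choose the multiplicity vector $m^{(n)}$ with all entries $\ge 1$ so that the point $\zeta^{(n)} = \bigl( m_j^{(n)} k_j / k_{n+1} \bigr)_j$ produced by Lemma~\ref{lm:b} lies close to $\xi^{(n)}$. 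Concretely, for a large integer $N_n$ I set $m_j^{(n)} = \max\bigl(1, \mathrm{round}(\xi^{(n)}_j N_n / k_j)\bigr)$; then $k_{n+1} = \sum_j m_j^{(n)} k_j = N_n + O(1)$ and $\zeta^{(n)}_j \to \xi^{(n)}_j$ as $N_n \to \infty$, the only persistent error arising at the coordinates where $\xi^{(n)}_j = 0$ (forced to take $m_j^{(n)} = 1$), which contributes $O(1/N_n)$. Taking $N_n$ large enough at each stage guarantees $\| \zeta^{(n)} - \xi^{(n)} \|_1 \le 2^{-n}$. By Lemma~\ref{lm:a} the resulting AF-algebra $\cA$ is unital, separable, RFD and just-infinite.

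It then remains to identify $T(\cA)$ with $\Delta$. By the diagram \eqref{eq:b} and Lemma~\ref{lm:b}, $T(\cA)$ is the inverse limit of the same spaces $\Delta_n$ along affine maps $f'_n$ determined by $\zeta^{(n)}$, while $\Delta$ is the inverse limit along the maps $f_n$ determined by $\xi^{(n)}$. Endowing each $\Delta_n$ with the $\ell^1$ metric on barycentric coordinates, a direct estimate shows that all the maps $f_n$ and $f'_n$ are contractive (this uses $\sum_j \xi^{(n)}_j = \sum_j \zeta^{(n)}_j = 1$), and that, since $f_n$ and $f'_n$ agree on every extreme point except $e^{(n+1)}_{n+1}$, one has $d_\infty(f_n, f'_n) = \| \xi^{(n)} - \zeta^{(n)} \|_1 \le 2^{-n}$. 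Hence $\sum_n d_\infty(f_n, f'_n) < \infty$, and Corollary~\ref{cor:intertwining} furnishes an affine homeomorphism $\Delta \cong T(\cA)$, completing (i) $\Rightarrow$ (iv). I expect the main obstacle to be this construction step: the value $k_{n+1}$ depends on the very multiplicities being chosen, and the hard constraint $m_j^{(n)} \ge 1$ (indispensable for just-infiniteness) must be reconciled with summable approximation of the $\xi^{(n)}$. The resolution is to keep the $\ell^1$ metric fixed on all the $\Delta_n$, so that contractivity and summability never have to be played against each other --- exactly the trade-off warned about in the Remark following Corollary~\ref{cor:intertwining}.
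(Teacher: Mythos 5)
Your proof is correct and takes essentially the same route as the paper's: the same cycle of implications, Lazar--Lindenstrauss plus an inductive choice of multiplicities $m_j^{(n)} \ge 1$ making $\zeta^{(n)}$ summably close to $\xi^{(n)}$, and Corollary~\ref{cor:intertwining} to identify $T(\cA)$ with $\Delta$; the only substantive difference is that the paper realizes $\zeta^{(n)}$ \emph{exactly} as a rational approximant of $\xi^{(n)}$ with positive numerators, clearing denominators via $K = \prod_{j=0}^{n} k_j$ as in \eqref{eq:ell}, rather than using your rounding scheme, and both mechanisms work. One detail counts in your favor: fixing the $\ell^1$ metric on every $\Delta_n$ makes the maps $f_n, f'_n$ genuinely contractive (as Corollary~\ref{cor:intertwining} requires), whereas the paper's appeal to the Euclidean metric leaves contractivity unverified --- and it can in fact fail there, e.g.\ for $\xi^{(n)}$ an extreme point of $\Delta_n$ --- so your handling of the trade-off flagged in the remark after Corollary~\ref{cor:intertwining} is the more careful one.
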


\begin{proof} (i) $\Rightarrow$ (iv). If (i) holds, then by the theorem of Lazar and Lindenstrauss, mentioned in Section~\ref{Sec:prelim}, we may realize $\Delta$ as an inverse limit as  described in \eqref{eq0} and \eqref{eq1}, with respect to some sequence of elements $\xi^{(n)} = (\xi_0^{(n)}, \xi_1^{(n)}, \dots, \xi_n^{(n)}) \in \Delta_n$,  $n \ge 0$. 

Let $m_j^{(n)} \ge 1$, $0 \le j \le n$, be a system of multiplicities defining  a RFD just-infinite  AF-algebra $\cA$, as  in \eqref{eq:Bratteli}, cf.\ Lemma~\ref{lm:a}, with $k_0=1$ (or any other value of $k_0$), and with $k_{n+1}$ given as in \eqref{eq:k}, for $n \ge 0$. We show that $m_j^{(n)}$ can be chosen such that $T(\cA)$ is affinely homeomorphic to $\Delta$. Let
$$\zeta^{(n)} = \Big(\frac{m_0^{(n)} k_0}{k_{n+1}}, \, \frac{m_1^{(n)} k_1}{k_{n+1}}, \cdots, \, \frac{m_n^{(n)} k_n}{k_{n+1}}\Big) \in \Delta_n, \quad n \ge 0,$$
and let $f'_n \colon \Delta_{n+1} \to \Delta_n$ be given by $f'_n(e_j^{(n+1)}) = e_j^{(n)}$, for $0 \le j \le n$, and $f'_n(e_{n+1}^{(n+1)}) = \zeta^{(n)}$. By \eqref{eq:b} and Lemma~\ref{lm:b}, $T(\cA)$ is affinely homeomorphic to the inverse limit of the sequence 
$$\xymatrix{\Delta_0 & \Delta_1 \ar[l]_{f'_0} & \Delta_2 \ar[l]_{f'_1} &  \Delta_3 \ar[l]_{f'_2} & \cdots \ar[l]}.
$$
The simplices $\Delta$ and $T(\cA)$ are affinely homeomorphic if $\sum_{n=0}^\infty d_\infty(f_n,f'_n) < \infty$, by Corollary~\ref{cor:intertwining}, where $d_\infty$ is the uniform metric (as in Proposition~\ref{prop:intertwining}). It is easily seen that $d_\infty(f_n,f'_n) = d(\xi^{(n)}, \zeta^{(n)})$, where $d$ is the Euclidian metric on $\Delta_n \subseteq \R^{n+1}$. 

We determine $m_j^{(n)} \ge 1$ by induction after $n \ge 0$ such that $d(\xi^{(n)}, \zeta^{(n)}) \le 2^{-n}$, for all $n \ge 0$. For  any choice of $m_0^{(0)}$ we have $\xi^{(0)} = \zeta^{(0)}=1$. Let $n \ge 1$ and suppose we have chosen $m_j^{(r)}$, for all $0 \le j \le r < n$. Choose integers $\ell_0, \ell_1, \dots, \ell_{n} \ge 1$ such that
\begin{equation} \label{eq:ell}
\Big| \frac{\ell_j}{\sum_{i=0}^{n} \ell_i} - \xi^{(n)}_j\big| < \frac{1}{2^{n} \sqrt{n}}, \quad j = 0,1, \dots, n.
\end{equation}
Let $1=k_0, k_1, \dots, k_{n}$ be given as in \eqref{eq:k} (with respect to the existing $m_j^{(r)}$). Put $K= \prod_{j=0}^{n} k_j$,  and set $m_j^{(n)} = K \ell_j/k_j$, for $j=0,1, \dots, n$. Then $k_{n+1} =\sum_{j=0}^n m_j^{(n)} k_j = K \sum_{j=0}^{n} \ell_j$, so
$$\zeta_j^{(n)} = \frac{m_j^{(n)} k_j}{k_{n+1}} = \frac{\ell_j} {\sum_{i=0}^{n} \ell_i}.$$
Hence, by \eqref{eq:ell}, we obtain that $d(\xi^{(n)}, \zeta^{(n)}) < 2^{-n}$ as desired. 

The implications (iv) $\Rightarrow$ (iii) $\Rightarrow$ (ii) are trivial.

(ii) $\Rightarrow$ (i). If $\cA$ is infinite dimensional and RFD, then there is an infinite family $\{\pi_\alpha\}$ of pairwise inequivalent finite dimensional irreducible representations of $\cA$. Let $\tau_\alpha$ be the tracial state on $\cA$ obtained by composing $\pi_\alpha$ with the unique tracial state on $\pi_\alpha(\cA)$ (which is a full matrix algebra). Then each $\tau_\alpha$ is extremal (because $\pi_\alpha(\cA) = \pi_\alpha(\cA)''$ is a factor), and they are mutually distinct, because the $\pi_\alpha$'s are inequivalent, and hence have mutually distinct kernels. The infinite set of extremal traces $\{\tau_\alpha\}$ witnesses that $T(\cA)$ is infinite dimensional.
\end{proof}

\begin{remark}[Other invariants: The ordered $K_0$-group and the characteristic sequence] \label{rem:other-invariants}
Suppose that $\cA$ is a unital just-infinite AF-algebra arising from a Bratteli diagram of the type described in \eqref{eq:Bratteli}. Modifying slightly the argument from \cite[Section 4.1]{GMR:JI} one can show that its $K$-theory $(K_0(\cA),K_0(\cA)^+,[1_\cA])$, as an ordered abelian group with distinguished order unit $[1_\cA]$, 
is isomorphic to the triple $(G,G^+,u)$, where $G$ is the subgroup of $\prod_{n=0}^\infty \Z$ consisting of all $x = (x_n)_{n=0}^\infty$ for which the identity
$$x_{n+1} = \sum_{j=0}^n m_j^{(n)} \, x_j,$$
holds eventualy, and $u=(k_0,k_1,k_2, \dots) \in G^+$ determined in \eqref{eq:k}. The order on $G$ is the one inherited from the pointwise order on $\prod_{j=0}^\infty \Z$. 

The group $G$ has the additional (non-degeneracy) property that whenever $F$ is a finite subset of $\{0,1,2, \dots\}$ and $\rho_F \colon \prod_{n=0}^\infty \Z \to \prod_{n \in F} \Z$ is the canonical projection map, then $\rho_F(G) = \prod_{n \in F} \Z$. If $F = \{j\}$, then $\rho_F$ is the homomorphism of $K_0$-groups induced by the irreducible representation $\pi_j \colon \cA \to M_{k_j}$. 

Since AF-algebras are completely classified by their ordered $K_0$-group, by Elliott's theorem, all information about $\cA$ is contained in $(G,G^+,u)$. However, our picture of $G$ is not sufficiently explicit to easily reveal this information. It would be desirable to have a more concrete picture of the ordered $K_0$-group of this class of AF-algebras. Here are two other (somehow related) questions, to which we do not know the answer: Can one characterize the class of ordered\footnote{By this we mean that $G = G^+ - G^+$, when $G$ is equipped with the order inherited from $\prod_{j=0}^\infty \Z$.} non-degenerated subgroups $G$ of  $\prod_{j=0}^\infty \Z$ which are dimension groups of a RFD just-infinite AF-algebra. Is the dimension group of an arbitrary RFD just-infinite AF-algebra of this form, i.e., is it an ordered non-degenerated subgroup of $\prod_{j=0}^\infty \Z$?

Two other invariants of RFD just-infinite \Cs s, already mentioned, are  their trace simplex, discussed in the previous theorem, and their characteristic sequence $\{k_n\}_{n=0}^\infty$, which by the discussion above coincides with the order unit $u \in G$. Whereas the trace simplex can be any infinite dimensional metrizable Choquet simplex $\Delta$ by Theorem~\ref{thm:Simplex-ji}, it is less obvious what are the possible values of the characteristic sequence. It was shown in \cite[Proposition 3.18]{GMR:JI}, that $k_n \to \infty$, as $n \to \infty$. 

For each fixed Choquet simplex $\Delta$ as above, the set of possible characteristic sequences $\{k_n\}_{n=0}^\infty$ of a RFD just-infinite \Cs{} $\cA$ with $T(\cA) = \Delta$, is uncountable. Indeed, inspecting the proof of (i) $\Rightarrow$ (iv) in Theorem~\ref{thm:Simplex-ji}, we see that, for each $n \ge 0$, there are (countably) infinitely many $(n+1)$-tuples $(\ell_0, \ell_1, \dots, \ell_n)$ satisfying \eqref{eq:ell}. These, in turn, give rise to infinitely many choices for the multiplicity vector $(m_0^{(n)}, m_1^{(n)}, \dots, m_n^{(n)})$, and hence infinitely many values of $k_{n+1}$. This results in uncountably many possibilities for the characteristic sequence $\{k_n\}$  of a RFD just-infinite AF-algebra with trace simplex $\Delta$. 
\end{remark}

\section{An example}
\label{Sec:example}

\noindent In this section we make explicit computations of the Choquet simplex $\Delta$ described in \eqref{eq0} and \eqref{eq1}, and in particular of the trace simplex of the RFD just-infinite \Cs{} constructed in \cite[Section 4.1]{GMR:JI} (and in \eqref{eq:Bratteli} with $m_j^{(n)} = 1$, for all $j,n$). 

Recall that $\Delta$ is determined by the sequence $\xi^{(n)} \in \Delta_n$, $n \ge 0$, cf.\ \eqref{eq1}.
The first class of examples we shall consider will be referred to as the \emph{stationary case}, by which we mean that the sequence of points $\xi^{(n)} \in \Delta_n$ satisfies $f_n(\xi^{(n+1)}) = \xi^{(n)}$ for all sufficiently large $n$.

\begin{lemma} \label{stationary} Let $t = \{t_n\}_{n=0}^\infty$ be a non-zero sequence of non-negative numbers. Let $n_0 \ge 0$ be the smallest integer such that $t_{n_0} \ne 0$. For each $n \ge n_0$, define
\begin{equation} \label{stat}
\xi^{(n)} = \Big(\sum_{j=0}^n t_j\Big)^{-1} (t_0,t_1, \dots, t_n) \in \Delta_n,
\end{equation}
and let $\xi^{(n)} \in \Delta_n$ be arbitrary, for $0 \le n < n_0$ (if $n_0 >0$). Then $\{\xi^{(n)}\}_{n=0}^\infty$ gives rise to a stationary sequence.
Conversely, each stationary sequence $\{\xi^{(n)}\}_{n=0}^\infty$ arises in this way.
\end{lemma}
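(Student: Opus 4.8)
The plan is to prove the two implications separately, in both cases by a direct computation with barycentric coordinates, exploiting that $f_n$ is the affine map pinned down by \eqref{eq1}: for $\eta = (\eta_0,\dots,\eta_{n+1}) \in \Delta_{n+1}$ one has $f_n(\eta) = \sum_{j=0}^{n}\eta_j\,e_j^{(n)} + \eta_{n+1}\,\xi^{(n)}$, so the $j$-th coordinate of $f_n(\eta)$ is $\eta_j + \eta_{n+1}\,\xi^{(n)}_j$ for $0 \le j \le n$. For the forward direction I would set $S_n = \sum_{j=0}^n t_j$, so that $S_{n+1} = S_n + t_{n+1}$ and $S_n > 0$ for $n \ge n_0$, and substitute $\xi^{(n)}_j = t_j/S_n$ from \eqref{stat}. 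The $j$-th coordinate of $f_n(\xi^{(n+1)})$ then becomes
\[
\frac{t_j}{S_{n+1}} + \frac{t_{n+1}}{S_{n+1}}\cdot\frac{t_j}{S_n} = \frac{t_j}{S_{n+1}}\cdot\frac{S_n + t_{n+1}}{S_n} = \frac{t_j}{S_n} = \xi^{(n)}_j,
\]
for $0 \le j \le n$ and every $n \ge n_0$, which gives $f_n(\xi^{(n+1)}) = \xi^{(n)}$ for all $n \ge n_0$, so the sequence is stationary. (When $n_0 \ge 1$ the step $n = n_0-1$ holds automatically, since $\xi^{(n_0)} = e_{n_0}^{(n_0)}$ by the choice of $n_0$ and $f_{n_0-1}(e_{n_0}^{(n_0)}) = \xi^{(n_0-1)}$ by \eqref{eq1}.)

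For the converse I would fix an integer $N$ with $f_n(\xi^{(n+1)}) = \xi^{(n)}$ for all $n \ge N$. Comparing $j$-th coordinates and writing $\lambda_n := 1 - \xi^{(n+1)}_{n+1}$ yields the key recurrence $\xi^{(n+1)}_j = \lambda_n\,\xi^{(n)}_j$ for $0 \le j \le n$ and $n \ge N$: level by level all the old coordinates are rescaled by the single factor $\lambda_n$, while the newest coordinate $\xi^{(n+1)}_{n+1} = 1-\lambda_n$ is created. Assuming first that $\lambda_n > 0$ for all $n \ge N$, I would put $P_n = \prod_{k=N}^{n-1}\lambda_k$ (with $P_N = 1$) and define
\[
t_j = \xi_j^{(N)} \ \text{ for } 0 \le j \le N, \qquad t_j = \xi_j^{(j)}/P_j \ \text{ for } j > N.
\]
Iterating the recurrence gives $\xi^{(n)}_j = t_j\,P_n$ for all $n \ge \max(N,j)$; summing over $0 \le j \le n$ and using $\sum_j \xi^{(n)}_j = 1$ forces $P_n S_n = 1$, and hence $\xi^{(n)}_j = t_j/S_n$, which is exactly \eqref{stat} for $n \ge N$. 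Since $P_N = 1$ gives $\sum_{j\le N} t_j = 1$, the weights $\{t_j\}$ are non-negative and non-zero, as required.

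The step I expect to be the main obstacle is the degenerate behaviour $\lambda_n = 0$, i.e.\ $\xi^{(n+1)} = e_{n+1}^{(n+1)}$: here the recurrence annihilates every earlier coordinate, the telescoping product $P_n$ collapses, and the weight assignment must be restarted from level $n+1$. A single such collapse is harmless and is precisely what the index $n_0$ records, since afterwards $t_j = 0$ for all $j$ below the restart and the finitely many pre-stationary terms are absorbed into the arbitrary initial segment allowed in \eqref{stat}. The delicate point is therefore to argue that, for a sequence that is stationary in the intended sense, such collapses occur only finitely often—equivalently, that $\xi^{(n)}$ is not eventually concentrated at its newest extreme point—so that one may take $N$ beyond the last collapse and have $\lambda_n>0$ throughout the relevant tail; otherwise no single weight vector could describe the whole sequence. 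Pinning down this non-degeneracy, and reconciling the threshold $N$ where the pattern stabilises with the index $n_0$, is where I would concentrate the effort.
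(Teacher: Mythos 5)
Your forward direction is correct and is the same computation as the paper's: the paper applies $f_n$ to the convex combination $\xi^{(n+1)}=\sum_{j=0}^{n}\beta^{-1}t_je_j^{(n+1)}+\beta^{-1}t_{n+1}e_{n+1}^{(n+1)}$ (where $\beta=\sum_{j=0}^{n+1}t_j$) and cancels, which is your coordinate identity in different clothing. Your converse also follows the paper's route: the recurrence $\xi^{(n+1)}_j=\lambda_n\xi^{(n)}_j$ for $0\le j\le n$, with $\lambda_n=1-\xi^{(n+1)}_{n+1}$, is exactly the paper's assertion that $(\xi^{(n+1)}_0,\dots,\xi^{(n+1)}_n)$ and $\xi^{(n)}$ are proportional. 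The paper then stops, declaring that this proportionality ``suffices''; your telescoping reconstruction of the weights, with $P_nS_n=1$ forced by normalization, is the actual argument that the paper leaves implicit, and it is carried out correctly under the hypothesis $\lambda_n>0$ for $n\ge N$. Your prose treatment of finitely many collapses is also right: restart the construction at the last collapse level $M$, which yields $t_j=\delta_{jM}$ for $j\le M$ and hence $n_0=M$, so \eqref{stat} then holds for all $n\ge n_0$ exactly as the lemma demands.

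The one thing to change is your plan for the degenerate case: the non-degeneracy you hope to prove (that collapses occur only finitely often) is false, so no amount of effort will establish it. The sequence $\xi^{(n)}=e_n^{(n)}$ for all $n$ is stationary, since $f_n(\xi^{(n+1)})=f_n(e_{n+1}^{(n+1)})=\xi^{(n)}$ holds automatically by the defining property \eqref{eq1} of $f_n$; it collapses at every level, and it cannot arise from \eqref{stat} for any weight sequence, because a collapse at a level $m>n_0$ forces $t_j=0$ for all $j<m$, in particular $t_{n_0}=0$, a contradiction. So stationary sequences whose mass escapes to the newest vertex infinitely often are genuine exceptions to the converse as literally stated. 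Note that the paper's proof is silent on precisely this point---at a collapse its proportionality statement degenerates to $0=0$, and the implicit reconstruction of $t$ breaks down exactly as yours does---so your proposal is not missing anything that the paper supplies. The defensible conclusion is that the converse holds for stationary sequences with only finitely many collapses (your argument, started beyond the last collapse, proves this), with the eventually-collapsing sequences understood as excluded. Similarly, the mismatch you worry about between $N$ and $n_0$ (stationarity failing at low levels with no collapse in the tail) is real under a literal reading, and disappears only if ``arises in this way'' is read as agreement with \eqref{stat} for all sufficiently large $n$---which is all that the paper ever uses of this lemma.
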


\begin{proof} Let $t = \{t_n\}_{n=0}^\infty$ be given as in the lemma, and let $\{\xi^{(n)}\}_{n=0}^\infty$ be given as in \eqref{stat}. We show that $f_n(\xi^{(n+1)}) = \xi^{(n)}$, for all $n \ge n_0$. Let $n \ge n_0$ and put $\alpha = \sum_{j=0}^n t_j$ and $\beta = \sum_{j=0}^{n+1} t_j$. Then
\begin{eqnarray*}
f_{n}(\xi^{(n+1)}) &=& \sum_{j=0}^n \beta^{-1}t_j f_n(e^{(n+1)}_j) + \beta^{-1} t_{n+1}f_n(e^{(n+1)}_{n+1}) \\ &=& \sum_{j=0}^n \beta^{-1}t_j e^{(n)}_j + \beta^{-1}t_{n+1} \xi^{(n)} 
\; = \; \big(\beta^{-1} \alpha  + \beta^{-1}t_{n+1}\big) \xi^{(n)} = \xi^{(n)}.
\end{eqnarray*}

To prove the converse claim, it suffices to show that if $f_n(\xi^{(n+1)}) = \xi^{(n)}$, then $\xi^{(n)}$ and $\big(\xi^{(n+1)}_0,\xi^{(n+1)}_1, \dots, \xi^{(n+1)}_{n}\big)$ are proportional. This follows from the identity:
\begin{eqnarray*}
\xi^{(n)} &=& f_n(\xi^{(n+1)}) = \sum_{j=0}^n \xi_j^{(n+1)} \cdot f_n(e_j^{(n+1)}) + \xi^{(n+1)}_{n+1} \cdot f_n(e^{(n+1)}_{n+1}) \\
&=& \sum_{j=0}^n \xi^{(n+1)}_j \cdot e_j^{(n)} + \xi^{(n+1)}_{n+1} \cdot \xi^{(n)} = \big(\xi^{(n+1)}_0,\xi^{(n+1)}_1, \dots, \xi^{(n+1)}_n\big) + \xi^{(n+1)}_{n+1} \cdot \xi^{(n)}. 
\end{eqnarray*}

\vspace{-.6cm}
\end{proof}

\noindent Two non-zero sequences  $t= \{t_n\}_{n=0}^\infty$ and $t'= \{t'_n\}_{n=0}^\infty$ of non-negative numbers give rise to the same stationary sequence $\{\xi^{(n)}\}_{n=0}^\infty$, as in Lemma~\ref{stationary}, if and only if they are proportional. Hence, if $\sum_{n=0}^\infty t_n < \infty$, we may without loss of generality assume that $\sum_{n=0}^\infty t_n =1$.

Let $\Delta_\infty$ denote the infinite dimensional Bauer simplex with extreme boundary $\partial_\mathrm{e}\Delta_\infty = \{e^{(\infty)}_j : 0 \le j \le \infty\}$ equipped with the topology making it homeomorphic to $\N_0 \cup \{\infty\}$, the one-point compactification of the discrete space $\N_0$. In particular, $e^{(\infty)}_j \to e^{(\infty)}_\infty$ as $j \to \infty$. Each point  $x$ in $\Delta_\infty$ is a unique infinite convex combination $x = \sum_j \lambda_j e^{(\infty)}_j$, where $\lambda_j \ge 0$ and $\sum_j \lambda_j = 1$.

\begin{proposition} \label{prop1}  Let $\Delta$ be the Choquet simplex defined in \eqref{eq0} and \eqref{eq1} with respect to a stationary sequence $\xi^{(n)} \in \Delta_n$, $n \ge 0$.  
Let $\{t_n\}_{n=0}^\infty$ be a sequence of non-negative numbers generating $\{\xi^{(n)}\}$ as in \eqref{stat}, and let $n_0 \ge 0$ be the smallest integer such that $t_{n_0} \ne 0$.

Let  $e_\infty \in \Delta$ be determined by $f_{\infty,n}(e_\infty) = \xi^{(n)}$, for all $n \ge n_0$, and let $\{e_n : 0 \le n < \infty\}$ be the dense subset of the extreme boundary of $\Delta$ defined in Lemma~\ref{lm0}. 
\begin{enumerate}
\item If $\sum_{n=0}^\infty t_j = \infty$, then $\Delta$ is affinely homeomorphic to $\Delta_\infty$, and the extreme boundary of $\Delta$ is equal to $\{e_n : 0 \le n \le \infty\}$. Moreover, $e_n \to e_\infty$ as $n \to \infty$. 
\item If $\sum_{n=0}^\infty t_j = 1$, then the extreme boundary of $\Delta$ is equal to the set 
$\{e_n : 0 \le n < \infty\}$, and 
$$\lim_{n\to\infty} e_n = e_\infty = \sum_{j=0}^\infty t_j e_j.$$
In particular, $\Delta$ is not a Bauer simplex. 
\end{enumerate}
\end{proposition}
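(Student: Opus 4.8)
The plan is to coordinatize $\Delta$ by the masses that a point places on the vertices $e_j$, and to read off both parts from that picture. For $x\in\Delta$ write $x^{(n)}=f_{\infty,n}(x)=(x_0^{(n)},\dots,x_n^{(n)})\in\Delta_n$. Using \eqref{eq1} together with the stationary form $\xi^{(n)}=S_n^{-1}(t_0,\dots,t_n)$, where $S_n=\sum_{i=0}^n t_i$ (cf.\ \eqref{stat}), the compatibility identity $f_n(x^{(n+1)})=x^{(n)}$ becomes $x_j^{(n)}=x_j^{(n+1)}+x_{n+1}^{(n+1)}\,t_j/S_n$ for $0\le j\le n$. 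In particular each sequence $(x_j^{(n)})_{n\ge j}$ is non-negative and non-increasing, hence convergent; I set $\lambda_j=\lambda_j(x)=\lim_n x_j^{(n)}\ge0$, an affine function of $x$. Telescoping the identity and using $\sum_{j=0}^n x_j^{(n)}=1$ then yields the reconstruction formula
\[
x_j^{(n)}=\lambda_j+t_j\,\frac{1-\sum_{i=0}^n\lambda_i}{S_n},\qquad 0\le j\le n,\ \ n\ge n_0,
\]
from which $x$ is recovered from $\lambda(x)=(\lambda_j)_{j\ge0}$ (the coordinates for $n<n_0$ then being obtained by pushing down). Thus $x\mapsto\lambda(x)$ is an injective affine map of $\Delta$ into the non-negative sequences with $\sum_j\lambda_j\le1$.

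Next I would determine the exact range, and this is where the two cases split. Letting $n\to\infty$ in the reconstruction formula and writing $\lambda_\infty=1-\sum_j\lambda_j$ and $S_\infty=\sum_i t_i$, the left-hand side tends to $\lambda_j$ while the right-hand side tends to $\lambda_j+t_j\lambda_\infty/S_\infty$. In case (i) one has $S_\infty=\infty$, the extra term disappears automatically, and every sub-probability sequence is realized: $\lambda$ is an affine bijection of $\Delta$ onto $\Sigma=\{\lambda\ge0:\sum_j\lambda_j\le1\}$. In case (ii) one has $S_\infty=1$, and consistency now forces $t_j\lambda_\infty=0$ for every $j$; since $t_{n_0}\ne0$ this gives $\lambda_\infty=0$, so the range is precisely the probability simplex $\Sigma'=\{\lambda\ge0:\sum_j\lambda_j=1\}$. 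A direct check identifies the special points: $\lambda(e_k)=\delta_k$ (the sequence that is $1$ in position $k$ and $0$ elsewhere) for each finite $k$, while $\lambda(e_\infty)_j=\lim_n t_j/S_n=t_j/S_\infty$, which is the zero sequence in case (i) and equals $(t_j)_j$ in case (ii).

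Since an affine bijection of convex sets preserves extreme points, I would then extract $\partial_{\mathrm e}\Delta$ from the elementary extreme-point structure of $\Sigma$ and $\Sigma'$. The extreme points of $\Sigma$ are the zero sequence together with the $\delta_k$, whence in case (i) $\partial_{\mathrm e}\Delta=\{e_k:k\ge0\}\cup\{e_\infty\}$ and $e_\infty$ is extreme. The extreme points of $\Sigma'$ are exactly the $\delta_k$, whence in case (ii) $\partial_{\mathrm e}\Delta=\{e_k:k\ge0\}$; moreover $\lambda(e_\infty)=(t_j)_j=\sum_j t_j\delta_j$ shows that $e_\infty=\sum_j t_j e_j$ is a proper (infinite) convex combination of distinct extreme points, hence not extreme (this uses that at least two of the $t_j$ are non-zero).

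Finally I would supply the topological conclusions. For the convergence $e_n\to e_\infty$ (valid in both cases) I would note that $f_{\infty,m}(e_n)=e_n^{(m)}$ for $m\ge n$, while $f_{\infty,m}(e_n)=f_{\infty,m}(e_\infty)$ for $m<n$ (both equal $\xi^{(m)}$ when $m\ge n_0$), so the two points agree at all levels $m<n$; a tail estimate in any compatible inverse-limit metric then gives $e_n\to e_\infty$. In case (i) the distinct points $e_n$ accumulate only at $e_\infty$, so $\partial_{\mathrm e}\Delta$ is homeomorphic to the one-point compactification $\N_0\cup\{\infty\}$ and in particular closed; hence $\Delta$ is a Bauer simplex, and the standard identification of a metrizable Bauer simplex with the space $M_1(\N_0\cup\{\infty\})$ of probability measures on its extreme boundary gives $\Delta\cong M_1(\N_0\cup\{\infty\})=\Delta_\infty$. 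In case (ii) the limit point $e_\infty$ lies outside $\partial_{\mathrm e}\Delta$, so the extreme boundary is not closed and $\Delta$ is not Bauer. The main obstacle is the range computation of the second paragraph: the subtle and decisive point is that when $\sum_n t_n$ converges no mass can actually escape to $e_\infty$---it is reflected back through the vertices via $e_\infty=\sum_j t_j e_j$---which is exactly why $e_\infty$ fails to be extreme there, and keeping track of this consistency condition rather than assuming the naive sub-probability parametrization is the crux.
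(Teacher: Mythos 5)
Your argument is correct, but it takes a genuinely different route from the paper's. The paper works in the opposite direction: using the universal property of the inverse limit it builds a continuous affine surjection $g \colon \Delta_\infty \to \Delta$ with $g(e_j^{(\infty)}) = e_j$ for all $0 \le j \le \infty$; in case (i) it verifies injectivity of $g$ (precisely by recovering coordinates as limits, i.e., via your functionals $\lambda_j$), and a continuous bijection of compact Hausdorff spaces is automatically a homeomorphism; in case (ii) surjectivity of $g$ gives $\partial_{\mathrm e}\Delta \subseteq \{e_j : 0 \le j \le \infty\}$, and $e_\infty = \sum_j t_j e_j$ is proved by applying $f_{\infty,n}$ to both sides. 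Your coordinate map $\lambda$ is essentially $g^{-1}$ (minus the coordinate at infinity), and it buys a sharper picture: an explicit affine parametrization of all of $\Delta$ by sub-probability (case (i)) resp.\ probability (case (ii)) sequences, from which the extreme boundary is read off by pure convex geometry, and which makes transparent why mass cannot escape to infinity when $\sum_n t_n$ converges. The costs, all fillable: surjectivity of $\lambda$ onto $\Sigma$ resp.\ $\Sigma'$ requires the (routine, but omitted) check that your reconstruction formula actually defines a compatible point of the inverse limit; the identification $\Delta \cong \Delta_\infty$ in case (i) invokes the representation of a metrizable Bauer simplex as the probability measures on its closed extreme boundary, where the paper needs only compactness; and the step from $\lambda(e_\infty) = (t_j)_j$ to $e_\infty = \sum_j t_j e_j$ uses that $\lambda$ respects \emph{countable} convex combinations---true, since each $\lambda_j$ is a bounded affine function, but worth a line (the paper instead evaluates $f_{\infty,n}$ directly on $\sum_j t_j e_j$). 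Finally, your caveat that non-extremeness of $e_\infty$ requires at least two non-zero $t_j$ is well taken: in the degenerate case $t = \delta_{n_0}$ one has $e_\infty = e_{n_0}$, the extreme boundary is closed, and $\Delta$ is in fact Bauer; the paper's statement and proof of (ii) silently share this caveat.
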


\begin{proof} We show first that there is a continuous affine surjective map $g \colon \Delta_\infty \to \Delta$ satisfying $g(e_j^{(\infty)}) = e_j$, for all $0 \le j \le \infty$. Thus, in particular,  $e_n \to e_\infty$ as $n \to \infty$. Consider, for each $n \ge n_0$,  the continuous affine surjective map $g_n \colon \Delta_\infty \to \Delta_n$ given by
\begin{equation} \label{eq3}
g_n(e_j^{(\infty)}) = \begin{cases}e_j^{(n)}, &  0 \le j \le n, \\ \xi^{(n)}, & n < j \le \infty. \end{cases}
\end{equation}
As $(f_n \circ g_{n+1})(e_j^{(\infty)}) =  g_{n}(e_j^{(\infty)})$, for all $0 \le j \le \infty$ and all $n \ge n_0$, we conclude that $f_n \circ g_{n+1} = g_{n}$. We therefore obtain  $g \colon \Delta_\infty \to \Delta$ satisfying $f_{\infty,n} \circ g = g_n$, for all $n \ge n_0$. It follows from Lemma~\ref{lm0} and \eqref{eq3}, that $g(e_j^{(\infty)}) = e_j$, when $0 \le j < \infty$;  and from the identity
$$f_{\infty,n}(e_\infty) =  \xi^{(n)} =  g_n(e_\infty^{(\infty)})= f_{\infty,n}(g(e_\infty^{(\infty)})),$$ 
which holds for all $n \ge n_0$,  we obtain $g(e_\infty^{(\infty)}) = e_\infty$.  

(i). Suppose that $\sum_{n=0}^\infty t_j = \infty$. We show, in this case, that $g \colon \Delta_\infty \to \Delta$ is injective and hence an affine homeomorphism.
Let 
$$y = \sum_{0 \le j \le \infty} y_j e_j^{(\infty)} \in \Delta_\infty, \qquad 
z = \sum_{0 \le j \le \infty} z_j e_j^{(\infty)} \in \Delta_\infty,$$ be given and suppose that $g(y) = g(z)$. Then $g_n(y) = g_n(z)$, for all integers $n \ge n_0$, so
$$(y_0,y_1, \dots, y_n) + (1-\sum_{j=0}^n y_j)\xi^{(n)} = (z_0,z_1, \dots, z_n) + (1-\sum_{j=0}^n z_j)\xi^{(n)},$$
for all $n \ge 0$. 
The assumption on the sequence $\{t_j\}$ implies that the $i$th entry of $\xi^{(n)}$ converges to $0$ as $n \to \infty$, for each $i \ge 0$. Fix the $i$th coordinate in the equation above and let $n \to \infty$. Then we see that $y_i = z_i$, for all $0 \le i < \infty$. As $\sum_j y_j = \sum_j z_j = 1$, this entails that $y_\infty=z_\infty$, so $y = z$, as desired.

(ii). Being surjective, $g$ maps $\partial_{\mathrm{e}} \Delta_\infty$ onto  $\partial_{\mathrm{e}} \Delta$, so $\partial_{\mathrm{e}}\Delta$ is contained in $g(\partial_{\mathrm{e}}\Delta_\infty) = \{e_j : 0 \le j \le \infty\}$. By  Lemma~\ref{lm0} we know that $e_j \in \partial_{\mathrm{e}}\Delta$ for all $0 \le j < \infty$. Now,
$$
f_{\infty,n}\big(\sum_{j=0}^\infty t_j e_j\big) =  \sum_{j=0}^\infty t_j f_{\infty,n}(e_j) 
=  \sum_{j=0}^n t_je_j^{(n)} + (1-\sum_{j=0}^n t_j)\xi^{(n)} =  \xi^{(n)}  = f_{\infty,n}(e_\infty),
$$
for all $n \ge n_0$, which implies that $e_\infty = \sum_{j=0}^\infty t_j e_j$. In particular, $g$ is not injective and $e_\infty$ is not an extreme point of $\Delta$. We already observed that $e_j \to e_\infty$, which proves the last claim in (ii).
\end{proof}

\begin{example} \label{ex1}
Let $\cA$ be the just-infinite RFD AF-algebra constructed in 
\cite[Section 4.1]{GMR:JI}, and also in \eqref{eq:Bratteli} with multiplicities $m_j^{(n)} = 1$, for all $0 \le j \le n$. It has characteristic sequence $k_0=1$, and $k_j = 2^{j-1}$, for $j \ge 1$, cf.\ \eqref{eq:k}. 

For $0 \le j < \infty$, let $\tau_j \in \partial_e(T(\cA))$, $\pi_j \colon \cA \to M_{k_j}$,  and $\chi \colon \Delta \to T(\cA)$ be as defined in Lemma~\ref{lm:a},  \eqref{eq:b}, and below Lemma~\ref{lm:b}; and recall that $\tau_j = \mathrm{Tr} \circ \pi_j$, that $\chi(e_j) = \tau_j$, and that $\{\tau_j\}_{j=0}^\infty$ is dense in $\partial_{\mathrm{e}} T(\cA)$. 

In the notation of Lemma~\ref{lm:b}, we have 
$$
\zeta^{(n)} = \big( \frac{k_0}{k_{n+1}}, \frac{k_1}{k_{n+1}}, \dots, \frac{k_n}{k_{n+1}}\big) 
= \big(\sum_{j=0}^{n} t_j\big)^{-1} \big(t_0,t_1,t_2, \dots, t_n\big),
$$
when $t_j = k_j$, for all $j \ge 0$. In other words, we are in the stationary case covered in Proposition~\ref{prop1}~(i). Hence $\Delta=\Delta_\infty$ and $\partial_{\mathrm{e}} \Delta = \{e_j : 0 \le j \le \infty\}$ (as in Proposition~\ref{prop1}). It follows that $\partial_{\mathrm{e}} T(\cA) = \{ \tau_j : 0 \le j \le \infty\}$, where $\tau_\infty = \chi(e_\infty)$ is  an extremal trace of type II$_1$, by Proposition~\ref{prop:typesoftraces}, and $\tau_j \to \tau_\infty$ as $j \to \infty$.

The extremal type II$_1$ trace $\tau_\infty$ can be described in a  little more detail as follows using Proposition~\ref{prop:typesoftraces}. Since $f_{\infty,n}(e_\infty) = \zeta^{(n)}$, for all $n \ge n_0$ ($n_0 = 0$ in our case), the restriction $\tau_\infty^{(n)}$ of $\tau_\infty$ to the subalgebra $\cA_n$ is given as
$$\tau_\infty^{(n)} =2^{-n} \big(\tau_0^{(n)} + \tau_1^{(n)} + 2 \tau_2^{(n)} + 4 \tau_3^{(n)} + \cdots + 2^{n-1} \tau_n^{(n)}\big),$$
for all $n \ge 1$, where $\tau_j^{(n)}$ is the $j$th extremal trace (cf.\ the comments above \eqref{eq:b}) on $\cA_n$. 

In conclusion, we have shown that the trace simplex of $\cA$ is the Bauer simplex $\Delta_\infty$, and each extremal trace of $\cA$ has been identified. In particular, $\cA$ has precisely one extremal trace, $\tau_\infty$, of type II$_1$, which is determined by the equation above. 
\end{example}

\begin{example} \label{ex2}
Let $\Delta$ be the (non-Bauer) Choquet simplex arising as in Proposition~\ref{prop1}~(ii) with respect to a sequence $\{t_n\}_{n=0}^\infty$ of positive numbers adding up to $1$. Then $\partial_{\mathrm{e}} \Delta = \{e_j : 0 \le j < \infty\}$, where $e_j$ is as defined in Lemma~\ref{lm0}, and $e_n \to \sum_{j=0}^\infty t_j e_j$ as $n \to \infty$. 

By Theorem~\ref{thm:Simplex-ji} we can realize $\Delta$ as the trace simplex, $T(\cA)$, of a RFD just-infinite AF-algebra $\cA$ with Bratteli diagram of the type described in \eqref{eq:Bratteli} (for suitable multiplicities $m^{(n)}_j \ge 1$, $0 \le j \le n$). In the notation of \eqref{eq:b}, and 
as in Example~\ref{ex1} above, $\tau_j$ is an extremal trace on $\cA$ of type I$_{k_j}$, and $\chi(e_j) = \tau_j$, for $0 \le j < \infty$. The set of extremal traces on $\cA$ is therefore equal to $\{\tau_0, \tau_1, \tau_2, \dots \}$, and
$$\lim_{n \to \infty} \tau_n = \sum_{j=0}^\infty t_j \tau_j,$$
in the weak$^*$ topoology. Hence $\cA$ has no extremal trace of type II$_1$. In particular, the bidual $\cA^{**}$ has no central portion of type II$_1$ (and $\cA$ has no representation of type II$_1$). 

The Bratteli diagram for $\cA$ is determined by the multiplicities $m^{(n)}_j$, which again can be derived (although not uniquely) from the given sequence $\{t_n\}_{n=0}^\infty$, as in the proof of Theorem~\ref{thm:Simplex-ji}. This construction simplifies when  all $t_n$ are rational numbers, in which case the proof of Theorem~\ref{thm:Simplex-ji} yields a  recipe for finding the multiplicities $m^{(n)}_j$ such that $\zeta^{(n)} = \xi^{(n)}$, for all $n \ge 0$. (We can choose the integers $\ell_j$ such that the quantity on the left-hand side of \eqref{eq:ell} is identically zero.) The affine homeomorphism $\chi \colon \Delta \to T(\cA)$ can then be obtained without using the approximate intertwining of Corollary~\ref{cor:intertwining}.
\end{example}

\begin{example} \label{ex:Bauer}
It follows from the theorem of Lazar and Lindstrauss, mentioned at the beginning of Section~\ref{sec:simplex}, that any infinite dimensional metrizable Choquet simplex is an inverse limit as in \eqref{eq0} and \eqref{eq1}. In particular, each Bauer simplex $\cP(X)$ of probability measures on an infinite metrizable compact Hausdorff space $X$ arises in this way. 

We indicate here a direct way to see this. For each integer $n \ge 0$, choose $x_n \in X$, an open cover $\{U_j^{(n)}\}_{j=0}^n$  of $X$, and a partition $\{\varphi_j^{(n)}\}_{j=0}^n \subseteq C(X)$ of  the unit subordinate to $\{U_j^{(n)}\}_{j=0}^n$ such that:
\begin{enumerate}
\item $\varphi_j^{(n)}(x_j) = 1$, for each $n \ge 0$ and for each $0 \le j \le n$,
\item $\mathrm{span}\{\varphi_j^{(n)} : n \ge 0, \; 0 \le j \le n\}$ is dense in $C(X)$. 
\end{enumerate}
Condition (i) implies that $x_j \in U_j^{(n)}$, for all $n \ge 0$, and that $\varphi_j^{(n)}(x_i) = \delta_{i,j}$, for all $n \ge 0$ and for all $i,j=0,1, \dots, n$.
Put $X_n = \{x_0,x_1, \dots, x_n\}$. Define ucp maps $G_n \colon C(X_n) \to C(X)$, $n \ge 0$, by
$$G_n(h) = \sum_{j=0}^n h(x_j) \varphi_j^{(n)}, \qquad h \in C(X_n),$$
and define ucp maps  $F_n \colon C(X_{n}) \to C(X_{n+1})$ by $F_n(h) = G_n(h)|_{X_{n+1}}$, for $n \ge 0$.  Let 
$f_n \colon \cP(X_{n+1}) \to \cP(X_n)$ and $g_n \colon \cP(X) \to \cP(X_n)$
be the continuous affine maps induced by $F_n$ and $G_n$, respectively, and let $\Delta$ be the inverse limit of the sequence
$$\xymatrix{\cP(X_0) & \cP(X_1) \ar[l]_{f_0} & \cP(X_2) \ar[l]_{f_1} &  \cP(X_3) \ar[l]_{f_2} & \cdots \ar[l]},$$
with associated affine continuous maps $f_{\infty,n} \colon \Delta \to \cP(X_n)$. The extreme boundary of $\cP(X_n)$ is equal to $\{\delta_{x_j}\}_{j=0}^n$, where 
$\delta_x$ denotes the Dirac measure in $x \in X$. One can now verify that $f_n(\delta_{x_j}) = \delta_{x_j}$, for $j = 0,1, \dots, n$, and that 
$$\xi^{(n)} := f_n(\delta_{x_{n+1}})  = \sum_{j=0}^n \varphi_j^{(n)}(x_{n+1}) \, \delta_{x_j}.$$
In other words, $\Delta$ arises as in \eqref{eq0} and \eqref{eq1} with $\xi^{(n)}$ given as above.
Since each $f_n$ is surjective, and since $f_{n+1} \circ g_{n+1} = g_n$, for all $n \ge 0$, there is a continuous affine surjective map $g \colon \cP(X) \to \Delta$ such that $f_{\infty,n} \circ g = g_n$, for all $n \ge 0$. 

To see that $g$ is injective, if $\mu, \nu \in \cP(X)$ are such that $g(\mu) = g(\nu)$, then $g_n(\mu) = g_n(\nu)$, for all $n \ge 0$, which implies that $\mu(\varphi_j^{(n)}) = \nu(\varphi_j^{(n)})$, for all $0 \le j \le n$. Hence $\mu=\nu$ by (ii).
\end{example}

\section{The Bratteli diagrams of general RFD AF-algebras and of RFD just-infinite AF-algebras}
\label{Sec:Bratteli}

In this section we shall give a complete description of which Bratteli diagram give rise to RFD AF-algebras and among those, which give rise to RFD just-infinite AF-algebras, see Theorems \ref{thm:Bratteli-rfd} and \ref{thm:ji-Bratteli} below.  Our results show that passing from a Bratteli diagram of an arbitrary RFD AF-algebra to one of a just-infinite RFD AF-algebra, is much like passing from a general Bratteli diagram (of an arbitrary AF-algebra) to a simple one. 

It is notationally more convenient to formulate these results in terms of direct limits of simplical groups, which carry the same information as Bratteli diagrams. Accordingly, we consider a direct limit of the form
\begin{equation} \label{eq:dl}
\xymatrix{(\Z^{m_1}, u_1) \ar[r]^-{A_1} & (\Z^{m_2}, u_2) \ar[r]^-{A_2} & (\Z^{m_3}, u_3) \ar[r]^-{A_3} & \cdots,}
\end{equation}
where $m_n \ge 1$ are integers, where each $A_n$ is an $m_{n+1} \times m_{n}$ matrix with non-negative integer coefficients, which is non-degenerate in the sense that all its rows and columns  are non-zero, and where $u_n$ is an order unit for $\Z^{m_n}$, satisfying $A_n u_n \le u_{n+1}$. (The order of $\Z^m$ is the usual one given by $x \le y$ if $x_j \le y_j$, for all $1 \le j \le m$.) 

The AF-algebra associated with the sequence \eqref{eq:dl} is the inductive limit of the sequence
\begin{equation} \label{eq:algebras}
\xymatrix{\cA_1 \ar[r]^{\varphi_1} & \cA_2 \ar[r]^{\varphi_2} & \cA_3 \ar[r]^{\varphi_3} & \cdots,}
\end{equation}
of finite dimensional \Cs s $\cA_n=  \bigoplus_{i=1}^{m_n} \cA_{n,i}$, where each $\cA_{n,i}$ is (isomorphic to) the full matrix algebra $M_{u_n(i)}$. The connecting mapping $\varphi_n \colon \cA_n \to \cA_{n+1}$ is determined, up to unitary equivalence, by the property the partial map $\cA_{n,k} \to \cA_{n+1,\ell}$ has multiplicity $A_n(\ell,k)$. If $A_n u_n = u_{n+1}$, then $\varphi_n$ is unital. 

\begin{definition} \label{def:RFD-JI}
A sequence of simplicial groups, as in \eqref{eq:dl}, will be said to have \emph{property (RFD)} if there exists a strictly increasing sequence $\{r_n\}_{n=1}^\infty$ of integers satisfying $1 \le r_n \le m_n$, such that the matrices $A_n$, $n \ge 1$, take the special form
\begin{equation} \label{eq:E_n}
A_n = \begin{pmatrix}I_{r_n} & 0\\A_n^{(2,1)} &A_n^{(2,2)} \\A_n^{(3,1)} &A_n^{(3,2)}  \end{pmatrix},\end{equation}
with respect to the block-decomposition 
$$m_{n+1} = r_n + (r_{n+1}-r_n) + (m_{n+1}-r_{n+1}), \qquad m_n = r_n + (m_n-r_n),$$
and, moreover, each column of the $(r_{n+1}-r_n) \times (m_n-r_n)$ matrix $A_n^{(2,2)}$ is non-zero. 

In addition, we require that $u_{n+1}(j) = u_n(j)$,  whenever $0 \le j \le r_n$.
Accordingly, there is a sequence $\{k_j\}_{j=1}^\infty$ of positive integers such that $u_n(j) = k_j$, whenever $1 \le j \le r_n$. 

If, moreover, each entry of each of the block matrices $A_n^{(i,j)}$, $n \ge 1$, $i=2,3$, $j=1,2$, from \eqref{eq:E_n} is non-zero, then we say that the sequence \eqref{eq:dl} has \emph{property (RFD-JI)}.
\end{definition}

\noindent If $r_n = m_n$, then the second column in \eqref{eq:E_n} disappears, and so does the condition on $A_n^{(2,2)}$. If $r_{n+1} = m_{n+1}$, then the third row in \eqref{eq:E_n} will disappear. The importance of allowing for the possibility that each $r_n$ is strictly smaller than $m_n$ (in the context of Theorem~\ref{thm:Bratteli-rfd}) is discussed in Example~\ref{ex:CAR-quotient}.

\begin{lemma} \label{lm:RFD-maps} Let $\cA = \varinjlim(\cA_n,\varphi_n)$ be the inductive limit of a sequence of finite dimensional \Cs s $\cA_n$ corresponding to a sequence of simplicial groups \eqref{eq:dl} with property (RFD). Then, in the notation of Definition~\ref{def:RFD-JI}, $\cA_n = \bigoplus_{j=1}^{m_n} \cA_{n,j}$, where $\cA_{n,j} = M_{k_j}$, when $1 \le j \le r_n$, (and where $\cA_{n,j} = M_{u_n(j)}$, for $r_n < j \le m_n$). 

Moreover, for each $j \ge 1$, there a surjective \sh{} $\pi_j \colon \cA \to M_{k_j}$ making the diagram
$$\xymatrix{\cA_n \ar[r]^-{\pi_j^{(n)}} \ar[d]_-{\varphi_{\infty,n}} & \cA_{n,j} \ar@{=}[d] \\
\cA \ar[r]_{\pi_j} & M_{k_j}}$$
commutative, for all $1 \le j \le r_n$, where $\pi_j^{(n)}$ is the projection onto the $j$th summand of $\cA_n$.
\end{lemma}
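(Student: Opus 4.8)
The plan is to exploit the block structure in \eqref{eq:E_n}, which is designed precisely so that the first $r_n$ summands of $\cA_n$ are ``frozen'' as the connecting maps proceed. First I would unwind the identity block $I_{r_n}$ in the upper-left corner of $A_n$: since the multiplicity of the partial map $\cA_{n,k}\to\cA_{n+1,\ell}$ is $A_n(\ell,k)$, the block $I_{r_n}$ says that for $1\le j\le r_n$ the $j$th summand of $\cA_n$ maps into the $j$th summand of $\cA_{n+1}$ with multiplicity $1$, and into no other summand among the first $r_{n+1}$ (the zero block to the right of $I_{r_n}$ handles the $r_{n+1}-r_n$ new coordinates). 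Combined with the stipulation $u_{n+1}(j)=u_n(j)=k_j$ for $1\le j\le r_n$, this forces $\cA_{n,j}=M_{k_j}$ for all such $j$, establishing the first assertion. This part is essentially bookkeeping.

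For the second assertion I would construct the $\pi_j$ as an inductive limit of the coordinate projections. Fix $j\ge 1$ and let $n_0$ be the first index with $r_{n_0}\ge j$ (such $n_0$ exists because $r_n\to\infty$, being strictly increasing). For $n\ge n_0$ define $\pi_j^{(n)}\colon\cA_n\to\cA_{n,j}=M_{k_j}$ to be the projection onto the $j$th summand. The key compatibility I need is
\begin{equation} \label{eq:RFD-compat}
\pi_j^{(n+1)}\circ\varphi_n=\pi_j^{(n)},\qquad n\ge n_0,
\end{equation}
and this is exactly what the identity block encodes: the $j$th summand of $\cA_{n+1}$ (for $j\le r_{n+1}$) receives a copy of the $j$th summand of $\cA_n$ with multiplicity $1$ and nothing from any other summand, so post-composing $\varphi_n$ with the projection onto coordinate $j$ recovers the projection onto coordinate $j$ at level $n$. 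By the universal property of the inductive limit, \eqref{eq:RFD-compat} yields a unique \sh{} $\pi_j\colon\cA\to M_{k_j}$ with $\pi_j\circ\varphi_{\infty,n}=\pi_j^{(n)}$ for all $n\ge n_0$, which is the commuting square in the statement. Surjectivity of $\pi_j$ follows since each $\pi_j^{(n)}$ is already surjective onto $M_{k_j}$.

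The one point that requires care—and which I expect to be the main (albeit mild) obstacle—is verifying \eqref{eq:RFD-compat} at the level of \sh s rather than merely at the level of $K_0$-data. The multiplicity matrix $A_n$ determines $\varphi_n$ only up to unitary equivalence in the target, so I must check that the commutativity persists after conjugating by the unitary implementing the chosen $\varphi_n$; this is harmless because projecting onto a single summand $\cA_{n+1,j}$ is insensitive to inner automorphisms of that summand, and the zero off-diagonal blocks guarantee that no part of any other summand of $\cA_n$ lands in $\cA_{n+1,j}$. A secondary subtlety is that the connecting maps need not be unital (we only assume $A_nu_n\le u_{n+1}$), so I would note that the diagram still commutes because the projections $\pi_j^{(n)}$ ignore the unit; one simply restricts attention to the relevant corner. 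With these remarks the verification is routine, and the lemma follows.
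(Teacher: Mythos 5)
Your proof is correct and takes essentially the same route as the paper: the identity block in \eqref{eq:E_n} together with $u_{n+1}(j)=u_n(j)=k_j$ gives the compatibility $\pi_j^{(n+1)}\circ\varphi_n=\pi_j^{(n)}$ after adjusting the identifications of $\cA_{n,j}$ with $M_{k_j}$ (your unitary-conjugation remark is exactly the paper's ``upon choosing a suitable identification''), and $\pi_j$ then exists by the universal property of the inductive limit, with surjectivity inherited from the $\pi_j^{(n)}$. One small remark: your ``secondary subtlety'' about non-unital connecting maps is actually vacuous, since $A_n(j,j)=1$ and $u_{n+1}(j)=u_n(j)$ force the corner map $\cA_{n,j}\to\cA_{n+1,j}$ to be a unital isomorphism regardless of whether $\varphi_n$ is unital.
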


\begin{proof} The first part of the lemma follows from directly from Definition~\ref{def:RFD-JI}. Since $A_n(j,j) = 1$, when $1 \le j \le r_n$, by \eqref{eq:E_n}, the map $\cA_{n,j} \to \cA_{n+1,j}$ induced by $\varphi_n$ is an isomorphism, so upon choosing a suitable  identification between  $\cA_{n,j}$ and $M_{k_j}$, we may  assume that $\pi_j^{(n+1)} \circ \varphi_n = \pi_j^{(n)}$, whenever $1 \le j \le r_n$. This provides us with the existence of $\pi_j$.
\end{proof}

\begin{lemma} \label{lm:separable} Let $\cA$ be a separable infinite dimensional RFD \Cs. Then there exists a separating sequence $\{\nu_j\}_{j=1}^\infty$ of pairwise inequivalent irreducible finite dimensional representations of $\cA$.
\end{lemma}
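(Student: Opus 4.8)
The plan is to first observe that the residual finite dimensionality gives, for every $a \in \cA$, the norm formula $\|a\| = \sup_\pi \|\pi(a)\|$, where the supremum ranges over a set of representatives of the equivalence classes of irreducible finite dimensional representations $\pi$ of $\cA$. Indeed, every finite dimensional representation decomposes as a finite direct sum of irreducible ones, so the RFD hypothesis says precisely that the intersection of the kernels of all irreducible finite dimensional representations is $\{0\}$; equivalently, the direct sum $\Phi = \bigoplus_\pi \pi$ over such a set of representatives is a faithful \sh. Since an injective \sh{} between \Cs s is isometric, $\|a\| = \|\Phi(a)\| = \sup_\pi \|\pi(a)\|$ for every $a \in \cA$.

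Next I would use separability to extract a countable isometric subfamily. Fix a dense sequence $\{a_n\}_{n=1}^\infty$ in $\cA$. For each pair $n,k \ge 1$ the norm formula lets me choose an irreducible finite dimensional representation $\pi_{n,k}$ with $\|\pi_{n,k}(a_n)\| > \|a_n\| - 1/k$. The resulting countable family $\{\pi_{n,k}\}$ then satisfies $\sup_{m,k}\|\pi_{m,k}(a_n)\| = \|a_n\|$ for every $n$, so the direct sum $\nu = \bigoplus_{n,k}\pi_{n,k}$ agrees in norm with the identity on the dense set $\{a_n\}$ and hence, by continuity, is isometric on all of $\cA$. Thus $\{\pi_{n,k}\}$ is already a countable separating family of irreducible finite dimensional representations.

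Finally I would arrange pairwise inequivalence and infinitude. Re-enumerate $\{\pi_{n,k}\}$ as $\{\rho_j\}_{j=1}^\infty$ and discard any $\rho_j$ that is unitarily equivalent to some $\rho_i$ with $i < j$. Since unitarily equivalent representations have identical kernels, this discarding does not enlarge $\bigcap_j \ker \rho_j = \{0\}$, so the thinned family $\{\nu_j\}$ still separates points and is now pairwise inequivalent. This family must be infinite: were it finite, say $\nu_1, \dots, \nu_N$, then $\bigoplus_{j=1}^N \nu_j$ would be a faithful, hence isometric, \sh{} of $\cA$ into the finite dimensional \Cs{} $\bigoplus_{j=1}^N \nu_j(\cA)$, forcing $\cA$ to be finite dimensional and contradicting the hypothesis. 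The only step requiring genuine input is the norm formula of the first paragraph, which is where the RFD assumption and the isometry of faithful \sh s enter; the subsequent passage to a countable, then pairwise inequivalent, and finally infinite family is routine bookkeeping built on top of it.
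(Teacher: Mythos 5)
Your proof is correct and takes essentially the same route as the paper's: the RFD hypothesis gives a faithful, hence isometric, direct sum of irreducible finite dimensional representations and thus the norm formula $\|a\| = \sup_\pi\|\pi(a)\|$; a dense sequence then yields a countable separating subfamily, which is thinned to achieve pairwise inequivalence. The only differences are cosmetic --- the paper picks a single representation per dense norm-one element with $\|\nu_j(a_j)\| \ge 1/2$ and uses a $1/2$-perturbation argument where you build a doubly indexed isometric family, and you spell out the infinitude of the thinned family, which the paper leaves implicit.
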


\begin{proof} Let $\{a_j\}_{j=1}^\infty$ be a dense subset of the set  elements in $\cA$ of norm one. Let $\mathcal{P}$ be the set of all irreducible finite dimensional representations of $\cA$. By the assumption that $\cA$ is RFD, the direct sum of all the representations from $\mathcal{P}$ is faithful and therefore isometric. Hence $\|a\| = \sup\{\|\nu(a)\| : \nu \in \mathcal{P}\}$ for all $a \in \cA$. In particular, for each $j \ge 1$, we can find $\nu_j \in \mathcal{P}$ such that $\|\nu_j(a_j)\| \ge 1/2$. If $a \in \cA$ is an arbitrary element of norm one, then $\|a-a_j\| < 1/2$ for some $j \ge 1$, whence $\|\nu_j(a)\| > \|\nu_j(a_j)\| - 1/2 \ge 0$, so $\nu_j(a) \ne 0$. This proves that $\{\nu_j\}_{j=1}^\infty$  is separating. Finally, by passing to a subset of the sequence $\{\nu_j\}_{j=1}^\infty$,  we can arrange that the representations $\nu_j$ are pairwise inequivalent. 
\end{proof}

\begin{theorem} \label{thm:Bratteli-rfd}
Any AF-algebra associated with a sequence of simplicial groups \eqref{eq:dl} with property (RFD) is itself RFD. Conversely, any RFD infinite dimensional separable AF-algebra is realized by a sequence of simplical groups which is (RFD).
\end{theorem}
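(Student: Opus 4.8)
The plan is to prove the two implications separately. The forward implication is a faithfulness argument in the spirit of Lemma~\ref{lm:a}, driven by the non-degeneracy of the block $A_n^{(2,2)}$; the converse is a telescoping-and-reordering of an arbitrary Bratteli diagram of $\cA$, in exact analogy with the passage from a general diagram to a simple one.

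For the forward implication, suppose \eqref{eq:dl} has property (RFD). Since $\{r_n\}$ is strictly increasing, every index $j$ is eventually $\le r_n$, so Lemma~\ref{lm:RFD-maps} furnishes surjective \sh s $\pi_j \colon \cA \to M_{k_j}$ for all $j \ge 1$. I claim $\bigoplus_j \pi_j$ is faithful, and, as in Lemma~\ref{lm:a}, it suffices to show that $\bigoplus_j \pi_j \circ \varphi_{\infty,n}$ is injective on each $\cA_n$. Fix $b \in \cA_n$ with $\pi_j(\varphi_{\infty,n}(b)) = 0$ for all $j$. For a stable index $j \le r_n$ the diagram of Lemma~\ref{lm:RFD-maps} gives $\pi_j(\varphi_{\infty,n}(b)) = \pi_j^{(n)}(b)$, so the first $r_n$ summands of $b$ vanish. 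For a non-stable index $k$, with $r_n < k \le m_n$, the hypothesis that every column of $A_n^{(2,2)}$ is non-zero supplies a newly stabilized index $\ell$, $r_n < \ell \le r_{n+1}$, with $A_n(\ell,k) \ge 1$; then $\pi_\ell(\varphi_{\infty,n}(b)) = \pi_\ell^{(n+1)}(\varphi_n(b))$ equals the $\ell$th summand of $\varphi_n(b)$, into which the $k$th summand of $b$ embeds with positive multiplicity. As a non-zero partial \sh{} out of a simple algebra is injective, the vanishing of this summand forces the $k$th summand of $b$ to vanish. Hence $b = 0$, the map is isometric on $\bigcup_n \varphi_{\infty,n}(\cA_n)$, and therefore on $\cA$, so $\cA$ is RFD.

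For the converse, let $\cA$ be an infinite dimensional separable RFD AF-algebra and fix, by Lemma~\ref{lm:separable}, a separating sequence $\{\nu_j\}_{j=1}^\infty$ of pairwise inequivalent irreducible finite dimensional representations, $\nu_j \colon \cA \to M_{k_j}$. Starting from any Bratteli diagram $\cA = \varinjlim(\cE_n, \psi_n)$, the first step is to locate, for each $j$, the \emph{tower} of $\nu_j$, namely the summands of $\cE_n$ surviving in the quotient $\cA/\ker \nu_j \cong M_{k_j}$. Since $M_{k_j}$ admits, after telescoping, the stationary presentation by a single copy of $M_{k_j}$ with identity connecting maps, one can telescope the diagram of $\cA$ so that from some level on this tower consists of a single summand $v_n^{(j)}$ of constant dimension $k_j$ with $v_n^{(j)} \to v_{n+1}^{(j)}$ of multiplicity one; a dimension count then shows that beyond that level $v_{n+1}^{(j)}$ receives input from $v_n^{(j)}$ only. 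Listing the already-stabilized towers first at each level produces exactly the identity block $I_{r_n}$ and the zero block of \eqref{eq:E_n}, together with the requirement $u_{n+1}(j) = u_n(j)$ for $j \le r_n$.

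It remains to secure the non-degeneracy of $A_n^{(2,2)}$, and this is where the separating property enters and where I expect the main difficulty. A non-stable summand $w$ of $\cE_n$ corresponds to a non-zero central projection $p$; separation yields some $j$ with $\nu_j(p) \ne 0$, which means precisely that $w$ survives in $\cA/\ker \nu_j$ and hence feeds with positive multiplicity into the tower $v_m^{(j)}$ at some later level $m$. The plan is therefore to interleave telescoping with the \emph{activation} of towers: at each level only finitely many non-stable summands are present, each is seen by some $\nu_j$, and one telescopes far enough and activates the corresponding finitely many towers $v^{(j)}$ precisely at the next level, so that every non-stable column feeds into a newly stabilized row. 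A diagonal bookkeeping over $n$ arranges this simultaneously while keeping $\{r_n\}$ strictly increasing, and since the construction only telescopes and reorders a Bratteli diagram of $\cA$, its inductive limit is still $\cA$. The delicate point is to reconcile the two demands on a freshly activated row $v_{n+1}^{(j)}$: at the activation step it must absorb the relevant non-stable summands of $\cE_n$ (filling in $A_n^{(2,1)}$ and $A_n^{(2,2)}$), yet from the next level on it must map by the identity and receive nothing further, which is exactly what the constant-dimension stabilization of its tower guarantees.
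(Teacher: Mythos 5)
Your forward direction is correct and is essentially the paper's own argument: the paper also reduces the problem to showing that $\bigoplus_{j=1}^{r_{n+1}} \pi_j^{(n+1)}\circ\varphi_n$ is injective on $\cA_n$, which follows from the identity block $I_{r_n}$ (handling the stable summands) together with the non-vanishing of the columns of $A_n^{(2,2)}$ (handling the rest); your block-by-block verification is an expanded form of exactly this, resting on Lemma~\ref{lm:RFD-maps}.

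For the converse your strategy is the same in spirit (a separating sequence of pairwise inequivalent irreducible representations from Lemma~\ref{lm:separable}, gradually ``activated''), but the implementation differs, and the paper's is worth recording because it sidesteps your delicate point entirely. The paper never tracks summands of a pre-given diagram. It fixes an increasing sequence $\cA_1\subset\cA_2\subset\cdots$ of finite dimensional subalgebras with dense union and chooses $n_1<n_2<\cdots$ and $r_1<r_2<\cdots$ so that (i) $\big(\bigoplus_{j\le r_k}\nu_j\big)(\cA_{n_k})=\big(\bigoplus_{j\le r_k}\nu_j\big)(\cA)$ and (ii) $\bigoplus_{j\le r_k}\nu_j$ is faithful on $\cA_{n_{k-1}}$; condition (i) is available simply because the target is finite dimensional and $\bigcup_n\cA_n$ is dense, and (ii) because each $\cA_n$ is finite dimensional and $\{\nu_j\}$ is separating. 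After telescoping, each $\cA_n$ is re-decomposed abstractly as $(\text{image})\oplus(\text{kernel})$ of $\bigoplus_{j\le r_n}\nu_j$; pairwise inequivalence gives that the image is all of $\bigoplus_{j\le r_n}M_{k_j}$, which produces the blocks $I_{r_n}$ and $0$ of \eqref{eq:E_n}, and (ii) is verbatim the statement that the columns of $A_n^{(2,2)}$ are non-zero. Thus (i) plays the role of your tower stabilization and (ii) of your activation, with no diagram combinatorics at all.

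The genuine gap in your plan is tower merging. Your activation step tacitly assumes that the finitely many towers $v^{(j)}$ activated at a given level occupy pairwise distinct summands (distinct also from the already active ones); otherwise the same summand of $\cE_{n+1}$ would be listed twice in the stable block and the form \eqref{eq:E_n} makes no sense. This distinctness can fail for arbitrarily many levels: take $\cA=C(X)\otimes M_2$ with $X=\{0\}\cup\{1/m : m\ge 1\}$ and $\cE_n$ the functions constant on $X\cap[0,1/n]$; then for every $m\ge n$ the inequivalent representations $\mathrm{ev}_0$ and $\mathrm{ev}_{1/m}$ have the \emph{same} tower at all levels up to $m$ (the ``tail'' summand), and this common tower satisfies all of your stabilization criteria --- single summand, constant size, multiplicity one, receiving nothing else. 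The repair is not hard: two inequivalent representations whose stabilized towers coincide at level $n$ have unitarily equivalent restrictions to $\cE_n$, so their towers must eventually separate, and once separated they remain separated because a stabilized tower summand receives only its own predecessor; alternatively one can de-duplicate and activate distinct summands only. But this must be addressed, and it is exactly the issue that the paper's joint-image condition (i) handles for free, since joint surjectivity onto $\bigoplus_{j\le r_k}M_{k_j}$ fails whenever two of the towers are merged.
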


\begin{proof} Assume first that $\cA$ is an AF-algebra, which is the direct limit of the sequence \eqref{eq:algebras} arising from a RFD sequence of simplicial groups \eqref{eq:dl}. We must show that $\cA$ is RFD. For this it suffices to show that the sequence $\{\pi_j\}_{j=1}^\infty$ of irreducible finite dimensional representations of $\cA$, found in Lemma~\ref{lm:RFD-maps} above, is separating. This will follow if we can show that the sequence $\{\pi_j \circ \varphi_{\infty,n}\}_{j=1}^\infty$ is separating for $\cA_n$, for each $n \ge 1$. By the assumptions on the multiplicity matrix $A_n$, that each column in $A_n^{(2,2)}$ is non-zero and that $A_n(j,j) = 1$, for $1 \le j \le r_n$, we obtain that the \sh{}
$$\bigoplus_{j=1}^{r_{n+1}} \pi_j^{(n+1)} \circ \varphi_n \colon \cA_n \to \bigoplus_{j=1}^{r_{n+1}} \cA_{n+1,j},$$
is injective. As  $\pi_j \circ \varphi_{\infty,n} = \pi_j^{(n+1)} \circ \varphi_n$, $1 \le j \le r_{n+1}$, it follows that $\{\pi_j \circ \varphi_{\infty,n}\}_{j=1}^{r_{n+1}}$ is separating for $\cA_n$, as desired.

Suppose now that $\cA$ is a separable infinite dimensional RFD AF-algebra. Choose an increasing sequence $\cA_1 \subset \cA_2 \subset \cA_3 \subset \cdots$ of finite dimensional sub-\Cs s of $\cA$ with dense union, and choose a separating countably infinite family $\{\nu_j\}_{j=1}^\infty$ of irreducible pairwise inequivalent finite dimensional representations of $\cA$, cf.\ Lemma~\ref{lm:separable}. Let $k_j$ denote the dimension of the representation $\nu_j$, so that $\nu_j(\cA) = M_{k_j}$. 

We claim that there are increasing sequences of integers $1 \le n_1 < n_2 < n_3 < \cdots$ and $1 = r_1 < r_2 < r_3 < \cdots$ such that:
\begin{enumerate}
\item $\big(\bigoplus_{j=1}^{r_k} \nu_j\big)(\cA_{n_k}) = \big(\bigoplus_{j=1}^{r_k}  \nu_j\big)(\cA)$, for each $k \ge 1$,
\item the restriction of $\bigoplus_{j=1}^{r_k} \nu_j$ to $\cA_{n_{k-1}}$ is faithful, for each $k \ge 2$.
\end{enumerate}
To see this note that for each $r \ge 1$ there exists $n \ge 1$ such that $\big(\bigoplus_{j=1}^{r} \nu_j\big)(\cA_n) = \big(\bigoplus_{j=1}^{r}\nu_j\big)(\cA)$; and for each $n \ge 1$ there exists $r \ge 1$ such that the restriction of $\bigoplus_{j=1}^{r} \nu_j$ to $\cA_{n}$ is faithful.  We can therefore find $n_1 \ge 1$ such that (i) holds with $r_1=1$. Next, we find $r_2 > r_1$ such that (ii) holds for $k=2$. Proceed like this to construct the desired sequences. 

Upon passing to a subsequence of $\cA_1 \subset \cA_2 \subset \cdots$, we may assume that $n_k = k$,  for all $k \ge 1$, so that $\big(\bigoplus_{j=1}^{r_n} \nu_j\big)(\cA_{n}) = \big(\bigoplus_{j=1}^{r_n}  \nu_j\big)(\cA)$, for all $n \ge 1$, and the restriction of $\bigoplus_{j=1}^{r_{n+1}} \nu_j$ to $\cA_{n}$ is faithful, for all $n \ge 1$.

Since the $\nu_j$'s are pairwise inequivalent, we obtain that
$$\big(\bigoplus_{j=1}^{r_n} \nu_j\big)(\cA_{n}) = \big(\bigoplus_{j=1}^{r_n}  \nu_j\big)(\cA) \cong \bigoplus_{j=1}^{r_n}  \nu_j(\cA) = \bigoplus_{j=1}^{r_n} M_{k_j},$$
for each $n \ge 1$. Being a finite dimensional \Cs, $\cA_n$ is the direct sum of the image and of the kernel of the \sh{} $\bigoplus_{j=1}^{r_n} \nu_j$. The kernel, if non-zero, is equal to a direct sum 
 $\bigoplus_{j=r_n+1}^{m_n} \cA_{n,j}$ of full matrix algebras $\cA_{n,j}$, for some $m_n > r_n$. If we set $\cA_{n,j} = \nu_j(\cA_n) = M_{k_j}$, for $1 \le j \le r_n$, then we obtain that $\cA_n$ is (isomorphic to) $\bigoplus_{j=1}^{m_n} \cA_{n,j}$ (with $m_n = r_n$ if the kernel of $\bigoplus_{j=1}^{r_n} \nu_j$ is zero). 

It remains to verify that the inclusion mapping $\cA_n \subset \cA_{n+1}$ has multiplicity matrix $A_n$ which satisfies the (RFD) property given in Definition~\ref{def:RFD-JI}. The restriction of $\nu_i$ to the direct summand $\cA_{n,j}$ is an isomorphism if $1 \le i=j \le r_n$, and zero if $1 \le i \le r_n$ and $j \ne i$. The multiplicity, $A_n(i,j)$, of the \sh{}
$$\xymatrix{\cA_{n,j} \ar@{^{(}->}[r] & \cA_n \ar@{^{(}->}[r] & \cA_{n+1} \ar[r]^-{\nu_i} & \cA_{n+1,i}}$$
is therefore one if $1 \le i=j \le r_n$, and zero if $1 \le i \le r_n$ and $j \ne i$. This shows that $A_n$ is of the form given in \eqref{eq:E_n}. We still need to show that each column of the block matrix $A_n^{(2,2)}$ is non-zero.  By (ii) we know that the map $\rho = \bigoplus_{j=1}^{r_{n+1}} \nu_j$ is injective on $\cA_n$, i.e.,
$$\xymatrix{\cA_n \ar@{^{(}->}[r]& \cA_{n+1} \ar[r]^-{\rho} & \bigoplus_{i=1}^{r_{n+1}} \cA_{n+1,i}}
$$
is injective. The multiplicity matrix of the map above, which is given by the  submatrix
\begin{equation*} 
 \begin{pmatrix}I_{r_n} & 0\\A_n^{(2,1)} &A_n^{(2,2)}  \end{pmatrix},\end{equation*}
of $A_n$, cf.\ \eqref{eq:E_n}, must therefore have non-zero columns. Hence $A_n^{(2,2)}$ has non-zero columns.

Finally, $u(n+1,j) = k_j = u(n,j)$,  for each $n \ge 1$ and each $1 \le j \le r_n$. 
\end{proof}

\noindent Every AF-algebra arises from a sequence of simplicial groups as in \eqref{eq:dl}, and it is well-known that the AF-algebra is simple if each entry of each of the matrices $A_n$, $n \ge 1$, is non-zero. With this in mind, we can (loosely) interpret Definition~\ref{def:RFD-JI}  and the theorem below as saying that the class of RFD just-infinite AF-algebras inside the class of all RFD AF-algebras is similar to the class of simple AF-algebras inside the class of all AF-algebras.

\begin{theorem} \label{thm:ji-Bratteli}
Any AF-algebra $\cA$ associated with a sequence of simplicial groups \eqref{eq:dl} with property (RFD-JI) is RFD and just-infinite. Moreover, 
$$\mathrm{Prim}(\cA) = \{0, \mathrm{ker}(\pi_1), \mathrm{ker}(\pi_2), \mathrm{ker}(\pi_3), \dots\},$$
where $\pi_j$, $j \ge 1$, are the irreducible representations determined in Lemma~\ref{lm:RFD-maps}.
\end{theorem}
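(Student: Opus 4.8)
The plan is to establish three things: that $\cA$ is RFD, that the listed ideals together with $0$ exhaust $\mathrm{Prim}(\cA)$, and that $\cA$ is just-infinite (every proper quotient is finite dimensional). The first point is essentially free: property (RFD-JI) is a strengthening of (RFD), so Theorem~\ref{thm:Bratteli-rfd} immediately gives that $\cA$ is RFD, and Lemma~\ref{lm:RFD-maps} provides the surjective representations $\pi_j \colon \cA \to M_{k_j}$. Thus I would open by citing these two results and recording that each $\ker(\pi_j)$ is a primitive ideal with $\cA/\ker(\pi_j) \cong M_{k_j}$ simple and finite dimensional.

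The core of the argument is the identification of $\mathrm{Prim}(\cA)$, which I would approach through the ideal structure at finite stages. The key feature distinguishing (RFD-JI) from (RFD) is that \emph{every} entry of each block $A_n^{(i,j)}$ (for $i=2,3$, $j=1,2$) is strictly positive. This positivity should force a strong linkage: any nonzero ideal of $\cA$ must, after intersecting with some $\cA_n$ and pushing forward, feed into every summand indexed $1 \le j \le r_m$ for all large $m$ via the unbroken diagonal and the dense off-diagonal connections. Concretely, I would show that if $I$ is a nonzero closed two-sided ideal of $\cA$, then $I \cap \cA_n \ne 0$ for some $n$, so $I$ contains a nonzero projection supported on some summand $\cA_{n,k}$; the strict positivity of the lower blocks of $A_n, A_{n+1}, \dots$ then propagates this projection into \emph{every} matrix block at sufficiently late stages, in particular into the first $r_m$ summands. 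Passing to the quotient $\cA/I$ and comparing with the separating family $\{\pi_j\}$, one concludes that $\cA/I$ factors through only finitely many of the $\pi_j$, i.e. $I \supseteq \bigcap_{j \in S^c} \ker(\pi_j)$ for a cofinite set, and that each primitive quotient is one of the $M_{k_j}$. This simultaneously pins down $\mathrm{Prim}(\cA)$ and shows that every proper quotient is finite dimensional.

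I expect the main obstacle to be the propagation step: making precise how strict positivity of the block entries forces a projection in one summand at stage $n$ to have nonzero image in all summands $1,\dots,r_m$ at stage $m \gg n$, and in particular that no nonzero ideal can ``avoid'' all but finitely many of the diagonal summands. The clean way to handle this is to work at the level of the simplicial groups \eqref{eq:dl}: an ideal corresponds to an order-ideal in the dimension group, and the hereditary subset it determines must be invariant under all the connecting matrices $A_n$; strict positivity of $A_n^{(i,1)}$ and $A_n^{(i,2)}$ then shows that once the hereditary set contains any standard basis vector it must contain $e_1,\dots,e_{r_m}$ at all later stages. Translating this back through the lattice isomorphism between order-ideals of the dimension group and ideals of $\cA$ yields the claim. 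Throughout I would lean on the already-noted fact (from \cite{GMR:JI}) that for a just-infinite RFD algebra the characteristic sequence $\{k_j\}$ tends to infinity, which guarantees the $\pi_j$ are genuinely inequivalent and the nonzero primitive ideals are indexed exactly as stated.
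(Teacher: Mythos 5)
Your skeleton (get RFD from Theorem~\ref{thm:Bratteli-rfd} and Lemma~\ref{lm:RFD-maps}, then analyze how ideals interact with the finite stages, then read off $\mathrm{Prim}(\cA)$) is the same as the paper's, but your central propagation step is false, and in a way that would break the proof rather than complete it. You claim that strict positivity of the lower blocks forces a non-zero ideal to spread ``into every matrix block at sufficiently late stages, in particular into the first $r_m$ summands,'' and, in the dimension-group formulation, that the hereditary set ``must contain $e_1,\dots,e_{r_m}$ at all later stages.'' If that were true, every non-zero ideal would eventually contain a separating family of summands, so every non-zero ideal would be all of $\cA$; that is, your mechanism proves $\cA$ is \emph{simple}, which is absurd since $\ker(\pi_1)$ is a non-zero proper ideal ($\cA$ is infinite dimensional, $M_{k_1}$ is not). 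Indeed $\ker(\pi_1)$ is a concrete counterexample to your claim: its trace on $\cA_m$ is $\bigoplus_{j \ne 1} \cA_{m,j}$, so it contains non-zero projections yet \emph{never} reaches summand $1$. The obstruction is exactly the top block $\bigl(\begin{smallmatrix} I_{r_n} & 0 \end{smallmatrix}\bigr)$ of $A_n$: for $\ell \le r_n$, the only non-zero entry in row $\ell$ is the diagonal one, so summand $\ell$ at stage $n+1$ receives input only from summand $\ell$ at stage $n$; propagation goes only ``downward'' into the summands of index $> r_{j_0}$, never back up into $\{1,\dots,r_{j_0}\}$.

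What the argument actually needs --- and what the paper proves --- is this stabilization, not total spreading: if $I \ne 0$ and $j_0$ is chosen with $I_{j_0} = \varphi_{\infty,j_0}^{-1}(I) \ne 0$, then setting $F = \{1,\dots,r_{j_0}\} \setminus T_{j_0}$ one shows $T_j = \{1,\dots,m_j\} \setminus F$ for all $j > j_0$, using both directions of the ideal/hereditary-set correspondence (forward invariance \emph{and} saturation). On the complementary summands $\bigoplus_{k \in F} \cA_{j,k}$ the induced connecting maps have multiplicity matrix $I_F$, hence are isomorphisms, so $\cA/I \cong \bigoplus_{k \in F} \cA_{j_0+1,k}$ is finite dimensional but typically non-zero --- this is precisely just-infiniteness, whereas your mechanism would give $\cA/I = 0$. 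Your final conclusion (``$\cA/I$ factors through finitely many $\pi_j$'') is correct but does not follow from your stated reasoning. Two further points in the Prim computation are missing from your sketch: that $0$ itself is primitive (this uses that a just-infinite \Cs{} is prime, \cite[Lemma 3.2]{GMR:JI}, together with separability), and that for a non-zero \emph{primitive} ideal the finite set $F$ above must be a singleton (primeness of the quotient), giving $I = \ker(\pi_j)$. Finally, the appeal to $k_n \to \infty$ from \cite{GMR:JI} is both unnecessary and mildly circular, since that fact is a consequence of just-infiniteness; inequivalence of the $\pi_j$ already follows from their having distinct kernels.
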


\begin{proof} Let $\cA$ be an AF-algebra obtained from a  sequence of simplicial groups as in \eqref{eq:dl} with property (RFD-JI). Write $\cA$ as the inductive limit of the sequence
$$\xymatrix{\cA_1 \ar[r]^-{\varphi_1} & \cA_2 \ar[r]^-{\varphi_2} &\cA_3 \ar[r]^-{\varphi_3} & \cdots \ar[r] & \cA},$$
where $\cA_n = \bigoplus_{i=1}^{m_n} \cA_{n,i}$,  $\cA_{n,i} = M_{u_n(i)}$, and where $u_n(i) = k_i$, when $1 \le i \le r_n$. 
Then $\cA$ is RFD by Theorem~\ref{thm:Bratteli-rfd}. We must show that $\cA$ also is just-infinite. For this purpose, let $I$ be a non-zero closed two-sided ideal of $\cA$, and set $I_j = \varphi_{\infty,j}^{-1}(I) \subseteq \cA_j$, for all $j \ge 1$. Then $I_j = \bigoplus_{k \in T_j} \cA_{j,k}$ for some subset $T_j$ of $\{1,2, \dots, m_j\}$. The quotient $\cA/I$ is isomorphic to the inductive limit of the sequence
\begin{equation} \label{eq:quotient}
\xymatrix{\bigoplus_{k \in F_1} \cA_{1,k} \ar[r]^-{\varphi'_1} & \bigoplus_{k \in F_2} \cA_{2,k} \ar[r]^-{\varphi'_2} & \bigoplus_{k \in F_3} \cA_{3,k} \ar[r]^-{\varphi'_3} & \cdots,}
\end{equation}
where $F_j = \{1,2, \dots, m_j\}\setminus T_j$, and where $\varphi'_j$ is the restriction of $\varphi_j$ to $\bigoplus_{k \in F_j} \cA_{j,k}$. 

Choose $j_0 \ge 0$ such that $I_{j_0} \ne 0$ (as we can by standard properties of inductive limit \Cs s).  Set $F = \{1,2, \dots, r_{j_0}\} \setminus T_{j_0}$. Then 
\begin{equation} \label{eq:Tj}
T_j = \{1,2,\dots, m_j\} \setminus F, 
\end{equation}
for all $j > j_0$. To see this, we use the following general facts about ideals: Let $j \ge 1$. If $k \in T_j$ and $A_j(\ell,k) \ne 0$, then $\ell \in T_{j+1}$. Moreover, if $\ell \in T_{j+1}$, for all $\ell$ for which $A_j(\ell,k) \ne 0$, then $k \in T_j$. 
Property (RFD-JI) implies that $A_j(\ell,k) \ne 0$, for all $1 \le k \le m_j$ and $r_{j} < \ell \le m_{j+1}$, and that $A_j(\ell,k) = \delta_{k,\ell}$, when $1 \le k \le m_j$ and $1 \le \ell \le r_j$.
With this information, we can conclude that $\{1,2, \dots, m_j\} \setminus F \subseteq T_j$, for all $j > j_0$. Conversely, if $j \ge j_0$ and $k \in \{1,2, \dots, r_j\}$, then $A_j(\ell,k) \ne 0$ if and only if $\ell \in \{k, r_j+1, r_j+2, \dots, m_{j+1}\}$, and since $\{r_j+1, r_j+2, \dots, m_{j+1}\} \subseteq T_{j+1}$, we conclude that $k \in T_j$ if and only if $k \in T_{j+1}$.  This proves that  \eqref{eq:Tj} holds, for all $j > j_0$.

Hence $F_j = F$, for all $j  > j_0$. By the properties of the multiplicity matrix $A_j$, we see that 
$$\varphi'_j \colon \bigoplus_{k \in F} \cA_{j,k} \to  \bigoplus_{k \in F} \cA_{j+1,k}$$
is an isomorphism, whenever $j > j_0$. It therefore follows from \eqref{eq:quotient} that $\cA/I$ is isomorphic to the finite dimensional \Cs{} $\bigoplus_{k \in F} \cA_{j_0+1,k}$. This proves that $\cA$ is just-infinite.

To prove the last claim, recall first that $0$ is a prime ideal  in any just-infinite \Cs, cf.\ \cite[Lemma 3.2]{GMR:JI}, (and hence primitive, since $\cA$ is separable). The representations $\pi_j$, $j \ge 1$, are irreducible, so their kernels are primitive ideals. Conversely, suppose that $I$ is a non-zero primitive ideal of $\cA$. Then $\cA/I$ is isomorphic to $\bigoplus_{k \in F} \cA_{j,k}$, for some $j \ge 1$ and some finite subset $F$ of $\{1,2, \dots, r_j\}$, by the argument above. As $A/\cI$ is primitive, and hence prime, $F$ must be a singleton, viz.\ $F= \{j\}$, for some $j \ge 1$. However, in that case $I = \mathrm{ker}(\pi_j)$, which proves that $\mathrm{Prim}(\cA)$ is as claimed.
\end{proof}

\noindent We end this paper by showing that not all RFD AF-algebras arise in the way described in Section~\ref{sec:simplex}, i.e., from a Bratteli diagram of the form given in \eqref{eq:Bratteli}, nor, more generally, as in \eqref{eq:dl}, \eqref{eq:algebras}, and \eqref{eq:E_n} with $r_n = m_n$, for all $n \ge 0$. Hence, in general, we cannot collapse the multiplicity matrix $A_n$ in \eqref{eq:E_n} to a matrix of the form
\begin{equation} \label{eq:A_n}
A_n = \begin{pmatrix}I_{r_n} \\B_n \end{pmatrix},
\end{equation}
for some $(r_{n+1}-r_n) \times r_n$ matrix $B_n$ over $\Z^+$. 

An ideal in a \Cs{} is said to be \emph{compact} if it corresponds to a compact subset of the primitive ideal space of the \Cs. All ideals generated by a finite number of projections are compact. 

\begin{lemma} \label{lm:not-A}
Let $\cA$ be a unital RFD AF-algebra which is realized from a sequence of simplicial groups with property (RFD), where $m_n=r_n$, for all $n \ge 1$, i.e., where the multiplicity matrices $A_n$ are of the from \eqref{eq:A_n} above. Then the quotient of $\cA$ by any proper compact ideal $I$ of $\cA$ has a (non-zero) finite dimensional representation. In particular, $\cA/I$ cannot be simple and infinite dimensional.
\end{lemma}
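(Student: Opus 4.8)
The plan is to exhibit, for any proper compact ideal $I$, a single index $\ell_0$ for which the irreducible representation $\pi_{\ell_0}\colon\cA\to M_{k_{\ell_0}}$ of Lemma~\ref{lm:RFD-maps} annihilates $I$, so that $\pi_{\ell_0}$ descends to a non-zero finite dimensional representation of $\cA/I$. Granting this, the ``in particular'' clause is immediate: the induced \sh{} $\cA/I\to M_{k_{\ell_0}}$ has an ideal as kernel, and if $\cA/I$ were simple this kernel would have to be zero, embedding $\cA/I$ into $M_{k_{\ell_0}}$ and forcing it to be finite dimensional, a contradiction.

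First I would use compactness to pin the ideal down at a finite stage. Writing $\cA=\varinjlim(\cA_n,\varphi_n)$ and $I_n=\varphi_{\infty,n}^{-1}(I)=\bigoplus_{k\in T_n}\cA_{n,k}$ (as in the proof of Theorem~\ref{thm:ji-Bratteli}), the projections $e_n=\varphi_{\infty,n}(1_{I_n})$ form an increasing approximate unit for $I$, so the open subset of $\Prim(\cA)$ corresponding to $I$ is the increasing union of the open sets corresponding to the ideals $\overline{\cA e_n\cA}$. Since $I$ is compact, that open set is compact, which forces $I=\overline{\cA e_{n_0}\cA}$ for some $n_0$; equivalently, $I$ is the ideal generated by $\varphi_{\infty,n_0}\bigl(\bigoplus_{k\in T_{n_0}}\cA_{n_0,k}\bigr)$.

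Next I would exploit the hypothesis $m_n=r_n$ coming from the shape \eqref{eq:A_n}. Since then every summand lies in the identity block, each index $\ell\le r_{n_0}=m_{n_0}$ satisfies $\pi_\ell\circ\varphi_{\infty,n_0}=\pi_\ell^{(n_0)}$, the projection onto the $\ell$th summand, by Lemma~\ref{lm:RFD-maps}. Properness of $I$ gives $T_{n_0}\ne\{1,\dots,m_{n_0}\}$, since otherwise $I$ would contain $\varphi_{\infty,n_0}(1)=1_\cA$; so I may pick $\ell_0\le m_{n_0}$ with $\ell_0\notin T_{n_0}$. Then $\pi_{\ell_0}$ kills every generator $\varphi_{\infty,n_0}(\cA_{n_0,k})$, $k\in T_{n_0}$, because $\pi_{\ell_0}^{(n_0)}$ annihilates the $k$th summand for $k\ne\ell_0$. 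As $\ker\pi_{\ell_0}$ is an ideal containing all of these generators, $I\subseteq\ker\pi_{\ell_0}$, and $\pi_{\ell_0}$ descends to the required representation of $\cA/I$.

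The main obstacle is the compactness reduction of the second paragraph: I must justify that a compact ideal is generated by the projection $e_{n_0}$ at a single finite stage and identify its generating summands with $T_{n_0}$. The structural feature that makes the final step succeed—and that is exactly what the lemma shows may fail once $r_n<m_n$—is that with $m_n=r_n$ every summand is permanent: its only incoming edge in the Bratteli diagram is the identity edge from itself, so an index lying outside $T_{n_0}$ at stage $n_0$ can never be absorbed into the ideal at a later stage. This permanence is precisely what lets one fixed $\pi_{\ell_0}$ survive in the quotient.
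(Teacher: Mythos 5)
Your proposal is correct and follows essentially the same route as the paper's proof: compactness pins $I$ down as the ideal generated by $\varphi_{\infty,n_0}(I_{n_0})$, properness plus unitality yields an index $\ell_0 \notin T_{n_0}$, and the identity-block structure of \eqref{eq:A_n} makes $\pi_{\ell_0}$ annihilate $I$ and descend to $\cA/I$. The only (harmless) differences are that you spell out the compactness reduction via the increasing projections $e_n$, which the paper merely asserts, and that you finish by showing $I \subseteq \ker \pi_{\ell_0}$ directly rather than constructing compatible maps on the quotient inductive system $\bigoplus_{j\in F_n}\cA_{n,j}$ as the paper does.
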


\begin{proof} Write $\cA = \varinjlim(\cA_n,\varphi_n)$, with  $\cA_n = \bigoplus_{i=1}^{r_n} \cA_{n,i}$, where $\cA_{n,i} = M_{k_i}$, and where the connecting maps $\varphi_n$ are unital with multiplicity matrix $A_n$ in the form \eqref{eq:A_n}.  

In the notation of the proof of Theorem~\ref{thm:ji-Bratteli}, set $I_n = \varphi_{\infty,n}^{-1}(I) = \bigoplus_{j \in T_n} \cA_{n,j}$, where $T_n$ is some subset of $\{1,2, \dots, r_n\}$,  and set $F_n = \{1,2, \dots, r_n\} \setminus T_{n}$, for all $n \ge 1$. As in the proof of Theorem~\ref{thm:ji-Bratteli}, $\cA/I$ is the inductive limit of the sequence
\begin{equation} \label{eq:quotient2}
\xymatrix{\bigoplus_{j \in F_1} \cA_{1,j} \ar[r]^-{\varphi'_1} & \bigoplus_{j \in F_2} \cA_{2,j} \ar[r]^-{\varphi'_2} & \bigoplus_{j \in F_3} \cA_{3,j} \ar[r]^-{\varphi'_3} & \cdots \ar[r] & \cA/I,}
\end{equation}
where $\varphi'_n$ is the restriction of $\varphi_n$ to $\bigoplus_{j \in F_n} \cA_{n,j}$. 

By compactness, $I$ is generated, as a closed two-sided ideal in $\cA$, by $\varphi_{\infty,n_0}(I_{n_0})$, for some $n_0 \ge 1$. We may assume that $n_0=1$. As $I$ is a proper ideal and the inclusion $\cA_1 \to \cA$ is unital, $F_1$ is non-empty. Choose $\ell \in F_1$. As $A_n(\ell,j)$ is non-zero if and only if $j=\ell$, by \eqref{eq:A_n}, it follows that $\ell \in F_n$, for all $n \ge 1$. Now, $\cA_{n,\ell} = M_{k_\ell}$, so arguing as in the proof of Lemma~\ref{lm:RFD-maps}, we obtain (surjective) \sh s $\pi_\ell^{(n)} \colon \bigoplus_{j \in F_n} \cA_{n,j} \to M_{k_\ell}$ satisfying 
$\pi_\ell^{(n+1)}  \circ \varphi'_n = \pi_\ell^{(n)}$, 
for all $n \ge 1$. This, in turns, induces a (non-zero) \sh{} $\pi'_\ell \colon \cA/I \to M_{k_\ell}$, as desired.
\end{proof}

\begin{example} \label{ex:CAR-quotient} 
We give here an example of a RFD AF-algebra $\cA$ which admits a  compact ideal $I$ such that $\cA/I$ is simple and infinite dimensional. By Lemma~\ref{lm:not-A} this implies that $\cA$ cannot be realized from a (RFD) sequence of simplicial groups, where $m_n=r_n$, for all $n \ge 1$, cf.\ Definition~\ref{def:RFD-JI}, or with $A_n$ given as in \eqref{eq:A_n}, or from the construction described in Section~\ref{sec:simplex}.

Consider the following Bratteli diagram, written in two different ways:
\begin{center}
\begin{equation*} 
\xymatrix@R-1.3pc{\bullet \ar@{=}[dd] \ar@{-}[ddr] & & & & \\ & & & &\\
\bullet \ar@{=}[dd] \ar@{-}[ddrr] & \bullet  \ar@{-}[dd] \ar@{-}[ddr] && &\\  & & & &\\
\bullet  \ar@{=}[dd] \ar@{-}[ddrrr]& \bullet  \ar@{-}[dd] \ar@{-}[ddrr]& \bullet  \ar@{-}[dd] \ar@{-}[ddr]& & \\  & & & &\\
\bullet  \ar@{=}[dd] \ar@{-}[ddrrrr] & \bullet  \ar@{-}[dd] \ar@{-}[ddrrr] & \bullet  \ar@{-}[dd] \ar@{-}[ddrr]& \bullet  \ar@{-}[dd] \ar@{-}[ddr]& \\  & & & &\\
\bullet  & \bullet   & \bullet   & \bullet  & \bullet  \\  \vdots & \vdots & \vdots & \vdots & \vdots
} 
\hspace{3cm}
\xymatrix@R-1.3pc{\bullet \ar@{-}[dd] \ar@{=}[ddr] & & & & \\ & & & &\\
\bullet \ar@{-}[dd] \ar@{-}[ddr] & \bullet  \ar@{-}[dd] \ar@{=}[ddr] && &\\  & & & &\\
\bullet  \ar@{-}[dd] \ar@{-}[ddrr]& \bullet  \ar@{-}[dd] \ar@{-}[ddr]& \bullet  \ar@{-}[dd] \ar@{=}[ddr]& & \\  & & & &\\
\bullet  \ar@{-}[dd] \ar@{-}[ddrrr] & \bullet  \ar@{-}[dd] \ar@{-}[ddrr] & \bullet  \ar@{-}[dd] \ar@{-}[ddr]& \bullet  \ar@{-}[dd] \ar@{=}[ddr]& \\  & & & &\\
\bullet  & \bullet   & \bullet   & \bullet  & \bullet  \\  \vdots & \vdots & \vdots & \vdots & \vdots
} 
\end{equation*}
\end{center}
and let $\cA$ be the unital AF-algebra arising from this diagram (where we assume the connecting mappings all are unital and that the full matrix algebra corresponding to the vertex in the first row is $\C$). Arguing as in Lemma~\ref{lm:RFD-maps}, from the point of view of the left-hand diagram, we have surjective \sh s 
$$\pi_1 \colon \cA \to M_{2^\infty}, \qquad \pi_j \colon \cA \to M_{k_j}, \quad j = 2,3, \dots,$$
for suitable integers $k_j$, $j \ge 2$, and where $M_{2^\infty}$ denotes the CAR-algebra (aka the UHF-algebra of type $2^\infty$). As $\pi_1$ is not injective (because $\cA$ cannot be simple), it follows in particular that $\cA$ is not just-infinite. The Bratteli diagram on the right-hand side has property (RFD), cf.\ Definition~\ref{def:RFD-JI} (when interpreted as a sequence of simplicial groups), with $r_n = n$, $m_n = n+1$, and with
$$A_n = \left( 
\begin{array}{c|c}
I_{n-1} & 0 \\
\hline  1 \cdots 1 & 1  \\
\hline 0 \cdots 0 & 2
\end{array}
\right).
$$
Since $A_n^{(2,2)} = (1)$ is a non-zero $1 \times 1$ matrix, Theorem~\ref{thm:Bratteli-rfd} yields that $\cA$ is RFD. One can also directly verify that $\bigoplus_{j=2}^\infty \pi_j$ is injective. As $\mathrm{ker}(\pi_1)$ is generated by a single projection (eg., a non-zero projection in the summand corresponding to the vertex at position $(2,1)$ in the right-hand side Bratteli diagram), it is compact, so it follows from Lemma~\ref{lm:not-A} that $\cA$ has the stated properties.

Let us finally note that the AF-algebra $\cB$ given by the Bratteli diagam
\begin{center}
\begin{equation*} 
\xymatrix@R-1.3pc{\bullet \ar@{-}[dd] \ar@{=}[ddr] & & & & \\ & & & &\\
\bullet \ar@{-}[dd]  & \bullet  \ar@{-}[dd] \ar@{=}[ddr] && &\\  & & & &\\
\bullet  \ar@{-}[dd] & \bullet  \ar@{-}[dd] & \bullet  \ar@{-}[dd] \ar@{=}[ddr]& & \\  & & & &\\
\bullet  \ar@{-}[dd]  & \bullet  \ar@{-}[dd]  & \bullet  \ar@{-}[dd] & \bullet  \ar@{-}[dd] \ar@{=}[ddr]& \\  & & & &\\
\bullet  & \bullet   & \bullet   & \bullet  & \bullet  \\  \vdots & \vdots & \vdots & \vdots & \vdots
} 
\end{equation*}
\end{center}
also has a closed two-sided ideal $I$ with $\cB/I$ being isomorphic to the CAR-algebra $M_{2^\infty}$. It is of the form given in \eqref{eq:dl}, \eqref{eq:algebras} with
$$A_n = \left(  \begin{array}{c}
I_{n} \\
\hline  0 \cdots 0  \; 2 
\end{array}
\right),$$
i.e, with $r_n = m_n = n+1$ and $A_n$ as in \eqref{eq:A_n}. In particular, $\cB$ is RFD (but not just-infinite). This does not conflict with  Lemma~\ref{lm:not-A} because the ideal $I$ is not compact.

The \Cs{} $\cB$ admits the following concrete description as an extension:
$$
\xymatrix{ 0 \ar[r] & \bigoplus_{n=1}^\infty M_{2^n} \ar[r]  & \prod_{n=1}^\infty M_{2^n} \ar[r] & \frac{\prod_{n=1}^\infty M_{2^n}}{\bigoplus_{n=1}^\infty M_{2^n}}   \ar[r] & 0 
\\  0 \ar[r] & \bigoplus_{n=1}^\infty M_{2^n} \ar[r]  \ar@{=}[u] & \cB \ar@{^{(}->}[u] \ar[r] & M_{2^\infty} \ar[u]   \ar[r] & 0
}
$$

\end{example}

{\small{
\bibliographystyle{amsplain}

\providecommand{\bysame}{\leavevmode\hbox to3em{\hrulefill}\thinspace}
\providecommand{\MR}{\relax\ifhmode\unskip\space\fi MR }
\providecommand{\MRhref}[2]{%
  \href{http://www.ams.org/mathscinet-getitem?mr=#1}{#2}
}
\providecommand{\href}[2]{#2}

}}

\vspace{1cm}

\noindent Mikael R\o rdam \\
Department of Mathematical Sciences\\
University of Copenhagen\\ 
Universitetsparken 5, DK-2100, Copenhagen \O\\
Denmark \\
rordam@math.ku.dk\\

\end{document}